\documentclass[11pt,reqno,UTF8,twoside]{amsart}
	\usepackage{mathrsfs}
	\usepackage{amsfonts,amssymb,amsmath,amsthm}
	\usepackage{cite}
	\usepackage[colorlinks=true,citecolor=red]{hyperref}
	\usepackage{titletoc}
	\usepackage{geometry}
	\usepackage{bm}
	\usepackage{indentfirst}
	\usepackage{graphicx}
	\usepackage{float} 
	\usepackage{booktabs}
	\usepackage{longtable}
	\usepackage{subfigure}
	\usepackage{stmaryrd}
	\usepackage{enumerate}
	\usepackage{tikz}
	\usepackage{ulem}
	\usepackage{color,soul}
	\usepackage{setspace}
	\usepackage{stmaryrd}
	\usepackage[pagewise]{lineno} 
	\allowdisplaybreaks[2]
	\usepackage{appendix}
	\usepackage{cases}
	\usepackage{todonotes}
	\usepackage{enumitem}

	\numberwithin{equation}{section}
	\newtheorem{theorem}{Theorem}[section]
	\newtheorem{lemma}[theorem]{Lemma}
	
	\newtheorem{proposition}[theorem]{Proposition}
	\newtheorem{maintheorem}{Theorem}
	
	\newtheorem*{conjecture}{Conjecture}

    \theoremstyle{definition}
	\newtheorem{remark}[theorem]{Remark}
	\newtheorem{definition}[theorem]{Definition}
	\newtheorem{example}[theorem]{Example}

	\newcommand{\T}{\mathbb{T}}
	\newcommand{\N}{\mathbb{N}}
	\newcommand{\R}{\mathbb{R}}
	\newcommand{\C}{\mathbb{C}}
	\newcommand{\Z}{\mathbb{Z}}
	\newcommand{\E}{\mathbb{E}}
	
	\newcommand{\Pb}{\mathbb{P}}

	\newcommand{\dvg}{\operatorname{div}}

	\newcommand{\V}{\mathcal{V}}
	\hypersetup{linkcolor=blue}

	\makeatletter
	\@namedef{subjclassname@2020}{\textup{2020} Mathematics Subject
	Classification}
	\makeatother

\begin{document}

	\title[ Invariant Gaussian measures for 2D Euler ]
	{{\Large O{\MakeLowercase{n the classification of invariant} G{\MakeLowercase{aussian measures for} \\
	\vspace{1mm}
	\MakeLowercase{the 2}D E{\MakeLowercase{uler equation}  
	}}}
	}}

	\author[Z. Liu]{ {\small Z\MakeLowercase{iyu} L\MakeLowercase{iu}}}

	\address[Ziyu Liu]{School of Mathematics and Physics, University of Science and Technology Beijing, 100083, Beijing, China.}
	\email{ziyu@ustb.edu.cn}

	\begin{abstract}
        
    We consider the two-dimensional incompressible Euler equation on $\T^2$ with Gaussian random initial data having independent Fourier coefficients. For every $\sigma>0$, we prove that such a Gaussian measure on $H^\sigma(\T^2)$ is invariant under the Euler flow if and only if it is supported either on shear flows or on cellular flows. This settles the invariant-measure classification conjecture posed by Bedrossian and Latocca ({\it Ann. Inst. H. Poincar\'e C Anal. Non Lin\'eaire}, 2026). The proof relies on closure of the Fourier support under non-degenerate interactions and an affine relation among the inverse variances along Euler triples. 
	\end{abstract}

	\subjclass[2020]{35Q31, 60B11, 37A50}

	\keywords{Gaussian measures; random initial data; invariant measures; quasi-invariance; Euler equation}

	\maketitle
	\setcounter{tocdepth}{1}
	\tableofcontents

	\section{Introduction}\label{Sec 1}

	\subsection{Background and motivation}\label{Sec 1.1}
    Random initial data have become an important tool in nonlinear PDE, providing a framework for studying typical behavior and statistical properties beyond individual trajectories; see, e.g., \cite{Bourgain-94,BT-08,DNY-24}. In fluid dynamics, this viewpoint is closely connected with statistical hydrodynamics and invariant-measure methods for long-time behavior; see, e.g., \cite{Kuksin-04,Latocca-23}.

    \vspace{0.6em}
	In this paper, we consider the two-dimensional incompressible Euler equation posed on $\T^2=(\R/(2\pi\Z))^2$ in vorticity form with random initial data, which reads
	\begin{equation}\label{eq euler}
	\begin{cases}	\partial_t\Omega+u\cdot\nabla\Omega=0,\quad x\in\T^2,\;t\in\R,\\
	\Omega(0,\cdot)=\Omega_{in}(\cdot).
	\end{cases}
	\end{equation}
	Here $\Omega\colon\T^2\rightarrow\R$ is the scalar vorticity, and $u\colon\T^2\rightarrow\R^2$ denotes the velocity. Specifically, $u$ is related to $\Omega$ through the Biot--Savart law given by
	\begin{equation*}	
	u=\nabla^\perp(-\Delta)^{-1}\Omega,\quad 
	\Omega=\nabla\wedge u=\partial_1u_2-\partial_2u_1,
	\end{equation*}
	where  $\nabla^\perp=(-\partial_2,\partial_1)$.

	To investigate Gaussian measures preserved by the Euler dynamics, we consider Gaussian random initial vorticities of the form
	\begin{equation}\label{eq Omega0}
	\Omega_{in}^{\omega}(x):=\sum_{n\in\Z^2_*}a_ng_n^\omega e^{in\cdot x}.
	\end{equation}
	Here $\Z^2_*:=\Z^2\setminus\{0\}$, $\{g_n\}_{n\in\Z^2_*}$ are complex Gaussian random variables specified later, and $a=\{a_n\}_{n\in\Z^2_*}$ is a complex-valued sequence. We denote the law of $\Omega_{in}^\omega$ by $\mu_a$ and define its {\it active Fourier support} by
	\begin{equation*}
	\mathcal S(a):=\{n\in\Z^2_*:a_n\neq0\}.
	\end{equation*}

    On its active real Fourier subspace, the measure $\mu_a$ has the formal Gibbs-type representation
    \begin{equation*}
    d\mu_a(\Omega)=Z_a^{-1}\exp\bigl(-Q_a(\Omega)\bigr)\,d\Omega,\quad Q_a(\Omega)=\frac{1}{4}\sum_{n\in\mathcal S(a)}\frac{|\Omega_n|^2}{|a_n|^2},
    \end{equation*}
    where $d\Omega$ denotes the formal Lebesgue measure on the real Fourier subspace determined by $\Omega_{-n}=\overline{\Omega_n}$ and $Z_a$ is the formal normalizing constant. 
    
    For full Fourier support, the classical energy--enstrophy Gibbs family corresponds to
    \begin{equation*}
    |a_n|^{-2}=c_0+\frac{c_1}{|n|^2},
    \end{equation*}
    since enstrophy and kinetic energy have Fourier weights $1$ and $|n|^{-2}$, respectively. Such full-support Gibbs measures normally lie below positive Sobolev regularity. This leads to the classification problem considered here: which smoother Gaussian measures of the form \eqref{eq Omega0} can be invariant under the Euler flow, and how must their active Fourier supports be structured?

    \vspace{0.6em}
    
    In \cite{BL-26}, Bedrossian and Latocca first addressed this question. Under a rapid-decay assumption, they proved that the invariant measures of the form \eqref{eq Omega0} are exactly those supported on shear or cellular flows. For $\sigma>3$, they also showed that non-invariance is generic in the Baire-category sense. They further formulated the following conjecture for the full range $\sigma>0$; the corresponding quasi-invariance conjecture is stated in \cite[Conjecture~1.10]{BL-26}.

    \begin{conjecture}
    For every $\sigma>0$, the Gaussian measure $\mu_a$ induced by \eqref{eq Omega0} is invariant under the Euler flow if and only if its active Fourier support is contained either in a line through the origin or in a circle centered at the origin.
    \end{conjecture}

	In the present paper, we prove this conjecture by removing the rapid-decay assumption and establishing the complete classification throughout the full Sobolev range $\sigma>0$. We also prove that quasi-invariance is equivalent to invariance whenever the active Fourier support is contained in a finite union of lines through the origin.

	\subsection{Main results}\label{Sec 1.2}
    
    We now state the main results, beginning with the phase space on which the flow is defined.  For $\sigma>1$, classical Sobolev well-posedness theory \cite{Kato-67,MB-02} provides a global Euler flow $\Phi_t$ on $H^\sigma(\T^2)$. Meanwhile, for the full range $\sigma>0$, the random initial data possess additional almost-sure Hölder regularity. Specifically, for any $\alpha\in(0,\sigma\wedge1)$, $\Omega_{in}^\omega\in c_0^\alpha(\T^2)$ almost surely, where $c_0^\alpha(\T^2)$ denotes the little Hölder space; see Proposition~\ref{prop wp}. The Hölder well-posedness theory \cite{Wolibner-33,MB-02,MY-18} then yields a global Borel flow $\Phi_t$ on $c_0^\alpha(\T^2)$.

    We regard $\mu_a$ as a Borel probability measure on $c_0^\alpha(\T^2)$ and define invariance and quasi-invariance with respect to the Borel flow $\Phi_t\colon c_0^\alpha(\T^2)\to c_0^\alpha(\T^2)$. By uniqueness, the resulting notion is independent of the particular choice of $\alpha\in(0,\sigma\wedge1)$.

	\begin{definition}\label{def 1}
	A probability measure $\mu$ on $c_0^\alpha(\T^2)$ is called {\it invariant} under $\Phi_t$ if $(\Phi_t)_*\mu=\mu$ for any $t\in\R$. Accordingly, $\mu$ is called {\it quasi-invariant} under $\Phi_t$ if $(\Phi_t)_*\mu\sim\mu$ for any $t\in\R$.
	\end{definition}

	Next, to address our settings for $\Omega_{in}^\omega$, let $(\Xi,\mathcal F,\Pb)$ be a probability space and set
	\begin{equation*}
	\Z^2_+:=\{(n_1,n_2)\in\Z^2_*:n_1>0\text{ or }n_1=0,\;n_2>0\}.
	\end{equation*}    
	We will work under the following assumptions for $\{g_n\}_{n\in\Z^2_*}$ and $\{a_n\}_{n\in\Z^2_*}$ in \eqref{eq Omega0}.

	\begin{itemize}
	\item [$\mathbf{(A_1)}$] ({\bf Gaussian structure}) {\it 
	For any $n\in\Z^2_+$, let $g_n^\omega:=r_n^\omega+is_n^\omega$ and $g_{-n}^\omega:=\overline{g_n^\omega}$, where the family $\{r_n^\omega,s_n^\omega:n\in\Z^2_+\}$ consists of i.i.d real-valued standard Gaussian variables with law $\mathcal N(0,1)$ on $\Xi$. 
	}
    \end{itemize}
    
    As each $g_n$ is rotationally invariant in law, the law of the real-valued field depends only on $|a_n|$ on $\Z^2_+$. Thus, without loss of generality, we restrict to real-valued coefficients.

    \begin{itemize}   
	\item [$\mathbf{(A_2)}$] ({\bf Coefficient regularity}) {\it 
	For $\sigma>0$, the coefficients $a=\{a_n\}_{n\in\Z^2_*}$ satisfy
	\begin{equation*}
	a\in h^\sigma:=\left\{\{a_n\}_{n\in\Z^2_*}:a_n\in\R,\; a_{-n}=a_n,\;\|\{a_n\}\|_{h^\sigma}<\infty\right\},
	\end{equation*}
	where the norm $\|\cdot\|_{h^{\sigma}}$ is defined by $\|\{a_n\}\|_{h^\sigma}^2:=\sum_{n\in\Z^2_*}(1+|n|^2)^\sigma|a_n|^2$.
	}        
	\end{itemize}

    Let us mention that the invariance problem for $\mu_a$ essentially relies on the geometry of the quadratic Fourier interactions in the Euler equation. Following \cite{EHS-17}, we therefore introduce the following notion. 
    
	\begin{definition}\label{def 2}
	Two lattice points $p,q\in\Z^2_*$ are called a {\it degenerate pair} if
	\begin{equation*}
	(q\cdot p^\perp)\left(\frac1{|p|^2}-\frac1{|q|^2}\right)=0.
	\end{equation*}
	That is, $p,q$ are collinear or have equal length. Otherwise, $(p,q)$ is called a {\it non-degenerate pair}. 
	\end{definition}

	Our main classification result for invariant Gaussian measures is the following.

	\begin{maintheorem}\label{thm 1}
	Under assumptions  $(\mathbf{A_1})$ and $(\mathbf{A_2})$, for equation \eqref{eq euler},\eqref{eq Omega0}, let $\sigma>0$ and $a=\{a_n\}_{n\in\Z^2_*}\in h^\sigma$. Then the following statements are equivalent:
	\begin{enumerate}[label=\textup{(\roman*)},leftmargin=2.8em]
	\item $\mu_a$ is invariant under the Euler flow;
	\item $\mathcal S(a)$ contains no non-degenerate pair;
	\item $\mathcal S(a)$ is contained in a line through the origin or in a circle centered at the origin;
	\item For $\mu_a$-almost every $\Omega_{in}$, $\Phi_t(\Omega_{in})=\Omega_{in}$ for any $t\in\R$.
	\end{enumerate}
	\end{maintheorem}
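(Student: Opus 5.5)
We prove the cycle of implications (iii)$\Rightarrow$(iv)$\Rightarrow$(i)$\Rightarrow$(ii)$\Rightarrow$(iii).

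\textbf{Easy implications.} For (iii)$\Rightarrow$(iv), if $\mathcal S(a)$ lies on a line $\R k$, then $\Omega_{in}$ is a shear flow depending only on $k\cdot x$. Its velocity is parallel to $k^\perp$, so $u\cdot\nabla\Omega_{in}=0$. If instead $\mathcal S(a)$ lies on a circle $|n|=R$, then $-\Delta\Omega_{in}=R^2\Omega_{in}$, so the stream function is $R^{-2}\Omega_{in}$ and $u\cdot\nabla\Omega_{in}=R^{-2}\nabla^\perp\Omega_{in}\cdot\nabla\Omega_{in}=0$. In either case $\Omega_{in}$ is a steady solution in $c_0^\alpha$, and uniqueness in the Hölder class gives $\Phi_t(\Omega_{in})=\Omega_{in}$ for all $t$. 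The implication (iv)$\Rightarrow$(i) is immediate. For (ii)$\Leftrightarrow$(iii), the direction (iii)$\Rightarrow$(ii) follows from the definition. For the converse, suppose $\mathcal S(a)$ is not contained in a line. Then it contains $p,q$ that are not collinear, and degeneracy forces $|p|=|q|$. Every further point $r$ must be either collinear with or of equal length to each of $p$ and $q$. A short case check finishes this: $r$ cannot be collinear with both, and $r=\pm p$ or $|r|=|p|$ puts $r$ on the circle, while $r=cp$ with $|c|\ne1$ makes $r$ non-collinear with $q$ but of different length, which is non-degenerate. Hence all points lie on the circle.

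\textbf{Main step: (i)$\Rightarrow$(ii).} Assume $\mu_a$ is invariant but $\mathcal S(a)$ contains a non-degenerate pair $(p,q)$. We differentiate invariance at $t=0$ against cylindrical test functionals $F(\Omega)=f(\Omega_{k_1},\dots,\Omega_{k_m})$. Because $\Omega_{in}\in c_0^\alpha$ has Hölder regularity, $\frac{d}{dt}F(\Phi_t\Omega)|_{t=0}$ is computed from the Fourier form of the nonlinearity,
\begin{equation*}
\partial_t\Omega_k=\sum_{p'+q'=k}c(p',q')\,\Omega_{p'}\Omega_{q'},\qquad c(p',q')=\tfrac12(q'\cdot p'^\perp)\Big(\tfrac1{|p'|^2}-\tfrac1{|q'|^2}\Big)
\end{equation*}
(up to sign and symmetrization). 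This sum converges in $L^2(\mu_a)$ for each fixed $k$, since $\sum_{p'}|c|^2|a_{p'}|^2|a_{k-p'}|^2<\infty$ follows from $a\in h^\sigma$ and $|c(p',k-p')|\lesssim|k|^2/|p'|$. Gaussian integration by parts then yields
\begin{equation*}
\E_{\mu_a}\Big[\sum_k\partial_{\Omega_k}F\cdot\partial_t\Omega_k\Big]=0 .
\end{equation*}
Choosing $F$ to be a cubic monomial $\Omega_{-p}\Omega_{-q}\Omega_{p+q}$ and applying Wick's theorem, the only pairings that survive give the identity
\begin{equation*}
c(p,q)\Big(|a_p|^2|a_q|^2+|a_q|^2|a_{p+q}|^2\cdot\tfrac{\cdot}{\cdot}+\cdots\Big)=0,
\end{equation*}
which is the classical detailed-balance relation $\sum_{\text{cyclic}}\tilde c\,|a_{n_2}|^2|a_{n_3}|^2=0$ over the triad $n_1+n_2+n_3=0$. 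Here $c(p,q)\neq0$. Dividing by $|a_p|^2|a_q|^2|a_{p+q}|^2$ (if these are nonzero) gives an affine relation among $|a_n|^{-2}$ along the Euler triple, of the form $\lambda_1 |a_{n_1}|^{-2}+\lambda_2 |a_{n_2}|^{-2}+\lambda_3|a_{n_3}|^{-2}=0$ with explicit geometric weights. If $a_{p+q}=0$, the same identity reads $c(p,q)|a_p|^2|a_q|^2\cdot(\text{nonzero})=0$, which is a contradiction. This proves \emph{closure}: for any non-degenerate pair $p,q\in\mathcal S(a)$, both $p\pm q$ lie in $\mathcal S(a)$, and by reflection so do the other triad members.

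\textbf{Conclusion and main obstacle.} Iterating closure from a non-degenerate pair generates a large sublattice region of $\mathcal S(a)$: for example, $p+q$ together with $p$ is again typically non-degenerate, so all $mp+nq$ with $(m,n)$ in a cone eventually belong to $\mathcal S(a)$. The affine relations along all these triples then force $|a_n|^{-2}=c_0+c_1|n|^{-2}$ on this set, so $|a_n|^2$ is bounded below by a positive constant (or decays at most like $|n|^2$) on infinitely many lattice points. This contradicts $a\in h^\sigma$ for $\sigma>0$. The main obstacle is this last step. One must show that the generated set is rich enough, avoiding degenerate configurations along the iteration, and that the linear system from the triad relations has only the energy--enstrophy solutions there. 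I would handle this by working in the lattice $p\Z+q\Z$, inductively propagating the affine law from an initial non-degenerate triangle of triples, and treating the finitely many degenerate exceptions by routing through alternative triples.
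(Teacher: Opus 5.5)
Your easy implications match the paper, and your local step is a valid alternative to the paper's. Differentiate $\E[(\Omega_{-p}\Omega_{-q}\Omega_{p+q})\circ\Phi_t]$ at $t=0$, apply Wick's theorem, and set $r=-p-q$. This gives the detailed-balance identity $\kappa(q,r)a_q^2a_r^2+\kappa(r,p)a_r^2a_p^2+\kappa(p,q)a_p^2a_q^2=0$. Your displayed version, which factors out a common $c(p,q)$, is not correct as written, because the three terms carry different coefficients. This one computation yields closure when $a_{p+q}=0$ and the triad relation otherwise. It replaces two separate arguments in the paper: the support/conditional-drift argument (Proposition~\ref{prop clo}) and the Galerkin--Liouville argument (Lemma~\ref{lemma V}, Proposition~\ref{prop triple}). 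The paper's closure argument, however, needs only absolute continuity along $t_j\to0$, and that is what Theorem~\ref{thm 2} reuses. You must still justify differentiating under the integral for an unbounded cubic $F$. Enstrophy conservation gives $|\Omega_k(t)|\le C\|\Omega_{in}\|_{L^2}$ and $|\partial_t\Omega_k(t)|\le C_k\|\Omega_{in}\|_{L^2}^2$, so the difference quotients are dominated by $C\|\Omega_{in}\|_{L^2}^4\in L^1(\mu_a)$.

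The genuine gap is the step you flag as the main obstacle, and that is exactly where the contradiction comes from. You show neither that the set carrying the affine law is unbounded nor that the law propagates. Your claim that all $mp+nq$ in a cone enter $\mathcal S(a)$ is unproved, and degenerate steps really occur: for $p=(2,0)$, $q=(-2,2)$ one has $|p+q|=|p|$, so $(p,p+q)$ is degenerate. Three ideas close the gap.

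\emph{Collinearity.} Since $r\cdot q^\perp=p\cdot r^\perp=q\cdot p^\perp$, the triad relation says exactly that the points $(|p|^{-2},|a_p|^{-2})$, $(|q|^{-2},|a_q|^{-2})$, $(|r|^{-2},|a_r|^{-2})$ are collinear. There is no linear system to analyze.

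\emph{Iterate only through non-degenerate sums.} Let $\mathcal C(p,q)$ be the closure of $\{\pm p,\pm q\}$ under $(m,n)\mapsto\pm(m+n)$ with $(m,n)$ non-degenerate. Closure puts $\mathcal C(p,q)$ inside $\mathcal S(a)$. Each new $k=m+n$ arrives with $|m|\neq|n|$, so the two old points determine the line $y=c_0+c_1x$ fixed by $p,q$, and collinearity forces $k$ onto it. No exceptions need rerouting.

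\emph{$\mathcal C(p,q)$ is infinite.} Suppose it were finite, with maximal norm $R$ attained at $n$. Take any $m$ not collinear with $n$ with $|m|<R$, and choose its sign so that $n\cdot m\ge0$. Then $(n,m)$ is non-degenerate and $|n+m|>R$, a contradiction. Hence every mode not collinear with $n$ has norm $R$. Since $|p|\neq|q|$, some $\ell$ has $|\ell|<R$, and it must be collinear with $n$. Pair $\ell$ with a mode $m$ not collinear with $n$, sign chosen so that $\ell\cdot m\ge0$; this pair is non-degenerate and $|\ell+m|>R$ again.

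Finally, take $|n_j|\to\infty$ in $\mathcal C(p,q)$. The case $c_0>0$ contradicts $a\in\ell^2$. The case $c_0=0$ forces $c_1>0$ and also contradicts $a\in\ell^2$. The case $c_0<0$, which you omit, contradicts $|a_n|^{-2}>0$.
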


    Theorem~\ref{thm 1} reveals a strong rigidity: invariance is completely determined by the geometry of the active Fourier support. The two admissible configurations correspond to shear and cellular flows, so every invariant Gaussian measure in this class is supported on steady Euler solutions; see Example~\ref{ex 1} below. The structural nature of the argument suggests that the same strategy may extend to the 3D Euler equation, where the richer Fourier interaction geometry may provide a possible route toward the classification of Gaussian invariant measures in that setting.
    \vspace{0.6em}

    Beyond invariance, quasi-invariance is also important in PDE dynamics, as it preserves sets of measure zero and hence allows almost-sure properties to be propagated by the flow. In recent years, quasi-invariance has attracted considerable attention for nonlinear dispersive equations; see, e.g., \cite{FS-22,OST-18,PTV-20}. Related equivalence phenomena have also appeared in stochastic fluid models; see \cite{CHT-25}. For the 2D Euler flow considered in this paper, we obtain the following result.

	\begin{maintheorem}\label{thm 2} Under assumptions  $(\mathbf{A_1})$ and $(\mathbf{A_2})$, for equation \eqref{eq euler},\eqref{eq Omega0}, let $\sigma>0$ and  $a=\{a_n\}_{n\in\Z^2_*}\in h^\sigma$.  Assume that $\mathscr D(a):=\{\mathbb R m:m\in\mathcal S(a)\}$ is finite. Then the following statements are equivalent:
	\begin{enumerate}[label=\textup{(\roman*)},leftmargin=2.8em]
	\item $\mu_a$ is quasi-invariant under the Euler flow;
	\item $\mu_a$ is invariant under the Euler flow.
	\end{enumerate}
	\end{maintheorem}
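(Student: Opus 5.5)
The implication (ii)$\Rightarrow$(i) is trivial, so the work is to show that quasi-invariance forces invariance when $\mathscr D(a)$ is finite. By Theorem~\ref{thm 1}, it suffices to show that a quasi-invariant $\mu_a$ has active support $\mathcal S(a)$ with no non-degenerate pair. I would argue by contradiction: suppose $\mu_a$ is quasi-invariant and $\mathcal S(a)$ contains a non-degenerate pair $(p,q)$.

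The first step is to transfer quasi-invariance to the short-time expansion of the flow. For a Gaussian measure with independent Fourier modes, $(\Phi_t)_*\mu_a\sim\mu_a$ means that the law of $\Omega(t)$ is equivalent to $\mu_a$. Since $\Omega_n(t)=\Omega_n(0)+tB_n(\Omega,\Omega)+O(t^2)$ with the quadratic Euler interaction
\[
B_n(\Omega,\Omega)=\sum_{k+l=n}\frac{(k^\perp\cdot l)}{2}\Bigl(\frac1{|k|^2}-\frac1{|l|^2}\Bigr)\Omega_k\Omega_l ,
\]
two consequences follow. First, the support of the pushed-forward law must stay inside the closed span of $\{e_n:n\in\mathcal S(a)\}$: any mode outside $\mathcal S(a)$ that is charged would make the two laws mutually singular. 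Applying this to a non-degenerate pair $p,q\in\mathcal S(a)$ shows that $p\pm q$ must lie in $\mathcal S(a)$, since the coefficient of $\Omega_p\Omega_q$ in $B_{p+q}$ is nonzero and the random variable $\Omega_p\Omega_q$ is nondegenerate. This gives closure of the support under non-degenerate interactions, presumably the closure lemma the paper uses for Theorem~\ref{thm 1}.

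The second step is to use finiteness of $\mathscr D(a)$. Starting from a non-degenerate pair $p,q$, the closure property generates $p+q$, and then pairs such as $(p,p+q)$ and $(q,p+q)$. These are again non-degenerate whenever the lengths differ, and collinearity fails because $p,q$ are independent. Iterating produces vectors $mp+nq$ for many coprime $(m,n)$, at least along some infinite family like $p+kq$, $k\ge1$. I expect degeneracy to fail at most for finitely many $k$ (equal lengths $|p+kq|=|q|$ and the like have at most two solutions in $k$), so the closure continues indefinitely. The directions $\mathbb R(p+kq)$ are pairwise distinct, which contradicts the finiteness of $\mathscr D(a)$.

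The main obstacle is the first step: making rigorous that quasi-invariance forbids the flow from charging modes outside $\mathcal S(a)$. The flow is only Hölder here, and $\mu_a$ is concentrated on the closed subspace $E=\overline{\operatorname{span}}\{\cos(n\cdot x),\sin(n\cdot x):n\in\mathcal S(a)\}$. Quasi-invariance forces $\Phi_t(\Omega)\in E$ almost surely for every $t$, so the projection of $\Omega(t)$ onto $e^{i(p+q)\cdot x}$ vanishes identically if $p+q\notin\mathcal S(a)$. Differentiating at $t=0$, which is legitimate for the finitely many relevant Fourier coefficients since the vorticity is in $c^\alpha_0$ and $\partial_t\Omega_n$ is a continuous bilinear expression, gives $B_{p+q}(\Omega,\Omega)=0$ almost surely. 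I would then isolate the $\Omega_p\Omega_q$ coefficient by conditioning on all other modes: a nonzero quadratic polynomial in independent Gaussians vanishes on a null set only, so that coefficient, and hence the interaction factor, must vanish. This contradicts non-degeneracy, which completes the closure step and the proof.
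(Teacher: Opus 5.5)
Your Step 1 is essentially the paper's closure argument (Proposition~\ref{prop clo}), and it is sound. You should take the difference quotient along a countable sequence $t_j\to0$. Your conditioning on the complementary modes works because only the finitely many terms $\Omega_p\Omega_q$, $\Omega_{-p}\Omega_{2p+q}$, $\Omega_{-q}\Omega_{p+2q}$ of $B(\Omega)_{p+q}$ involve the modes $\pm p,\pm q$.

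The genuine gap is in Step 2. The chain $p+kq\mapsto p+(k+1)q$ uses the pair $(p+kq,q)$, and this pair must be non-degenerate at \emph{every} step. Knowing that $|p+kq|=|q|$ has at most two solutions does not let the iteration ``continue indefinitely''. Once the chain reaches some $k_0\ge1$ with $|p+k_0q|=|q|$, the mode $p+(k_0+1)q$ is no longer produced. Nothing in your argument generates it, or any later member of the family, by another route. This really happens: for $p=(2,0)$, $q=(-1,1)$ the pair is non-degenerate, but $p+q=(1,1)$ has $|p+q|=|q|$, so your chain stops after its first step.

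The repair is short. Swapping $p,q$ if needed, assume $|p|>|q|$ and set
\[
f(k)=|p+kq|^2-|q|^2=|q|^2k^2+2(p\cdot q)k+|p|^2-|q|^2 .
\]
Since $f(0)>0$, the real roots of $f$, if any, have the same sign. Hence $f(j)\neq0$ either for all integers $j\ge0$ or for all integers $j\le0$. Accordingly, one of the chains $p+jq$ (adding $q$) or $p-jq$ (adding $-q$), $j\ge0$, consists only of non-degenerate steps. The directions $\R(p+kq)$ are then pairwise distinct because $(p+kq)\cdot(p+k'q)^\perp=(k'-k)\,p\cdot q^\perp\neq0$ for $k\neq k'$. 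In the example above, $f(k)=2(k-1)^2$ and the chain $p-jq$ works.

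The paper argues differently. It proves that $\mathcal C(p,q)$ is infinite by a maximal-length argument (Lemma~\ref{lemma C}). It then uses finiteness of $\mathscr D(a)$ to find infinitely many $n_\ell\in\mathcal C(p,q)$ on a single line with $|n_\ell|\to\infty$, and adds a fixed $k\in\mathcal C(p,q)$ off that line. The pairs $(n_\ell,k)$ are non-degenerate for large $\ell$ because the lengths eventually differ, and the sums $n_\ell+k$ lie on pairwise distinct lines. Once repaired, your route is more explicit and bypasses Lemma~\ref{lemma C}.
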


    Combined with the result above, this shows that, whenever $\mathcal S(a)$ is contained in a finite union of lines through the origin, quasi-invariance is equivalent to invariance and therefore yields no additional Gaussian measures beyond those supported on shear or cellular steady flows.

    \begin{remark}
    A special case of Theorem~\ref{thm 2} is when $\mathcal S(a)$ is finite. In this case, $a\in h^\sigma$ for every $\sigma>0$, so the classification in Theorem~\ref{thm 1} applies at any Sobolev regularity. Notice that the finiteness of $\mathscr D(a)$ is strictly weaker than that of $\mathcal S(a)$, since it allows infinitely many active modes along each of finitely many directions.
    \end{remark}

    \begin{example}\label{ex 1}
    The two classes of invariant Gaussian measures can be illustrated as follows.
    \begin{enumerate}[leftmargin=2.8em]
        \item[1)] Let $m\in\Z^2_*$ and $\{b_j\}_{j\in\Z\setminus\{0\}}\subset\R$ satisfy $b_{-j}=b_j$ and $\sum_{j\neq0}(1+j^2|m|^2)^\sigma|b_j|^2<\infty$. Setting $a_{jm}=b_j$ and $a_n=0$ otherwise, one has $\mathcal S(a)\subset \R m$. Hence $\mu_a$ is supported on shear-flow steady solutions and is invariant under the Euler flow.
        \item[2)]   Let $R>0$ satisfy $\{n\in\Z^2_*:|n|=R\}\neq\varnothing$, and assume that $a_n=0$ whenever $|n|\neq R$. Then  $(-\Delta)^{-1}\Omega_{in}=R^{-2}\Omega_{in}$ for $\mu_a$-almost every $\Omega_{in}$. Thus $\mu_a$ is supported on cellular-flow steady solutions and is again invariant under the Euler flow.
    \end{enumerate}      
    \end{example} 

    \subsection{Ingredients of the proof}\label{Sec 1.3}

    The proof of Theorem \ref{thm 1} is based on two key observations concerning the Fourier structure of invariant Gaussian measures. Proposition~\ref{prop mu} gives a rigidity relation for the variances along the interaction set generated by a non-degenerate pair, while Proposition~\ref{prop clo} shows that this generated set remains inside the active Fourier support.

    \begin{itemize}[leftmargin=2.8em]
    \item[1)]   The first ingredient is the rigidity property in Proposition~\ref{prop mu}. If $\mu_a$ is invariant and $p,q\in\mathcal S(a)$ form a non-degenerate pair, then there exist constants $c_0,c_1\in\R$ such that
    \begin{equation*}
    |a_n|^{-2}=c_0+\frac{c_1}{|n|^2}\quad \forall\,n\in\mathcal C(p,q),
    \end{equation*}
    where $\mathcal C(p,q)$ denotes the set of Fourier modes generated from $\{\pm p,\pm q\}$ through successive non-degenerate Euler interactions.
    
    Thus, once the inverse variances are fixed at the initial pair, the Euler interactions force the same affine relation throughout the generated set. This follows from the compatibility relation on every active triple $p+q+r=0$, $q\cdot p^\perp\neq0$, which implies that
    \begin{equation*}(|p|^{-2},|a_p|^{-2}),\quad(|q|^{-2},|a_q|^{-2}),\quad(|r|^{-2},|a_r|^{-2})
    \end{equation*}
    are collinear, and the relation can then be propagated along successive interactions.

    In particular, this affine law is precisely the covariance relation associated with the classical energy--enstrophy Gibbs family; see Remark \ref{rmk gibbs}.

    \vspace{0.6em}

    \item[2)] The second ingredient is the closure property in Proposition~\ref{prop clo}. Specifically, we show that if $p,q\in\mathcal S(a)$ form a non-degenerate pair and $\mu_a$ is invariant, then 
    \begin{equation*}
    \mathcal C(p,q)\subset\mathcal S(a).
    \end{equation*}
    
    Indeed, a non-degenerate interaction of $p,q$ necessarily creates the mode $p+q$. If $p+q$ were absent from the active support, it would remain zero almost surely under invariance, whereas its time derivative at $t=0$ contains a non-vanishing quadratic contribution from $p$ and $q$. This contradiction gives $p+q\in\mathcal S(a)$, and iteration implies the stated inclusion.
    \end{itemize}

    Combining these two ingredients, any non-degenerate active pair generates an unbounded set $\mathcal C(p,q)\subset\mathcal S(a)$ on which $|a_n|^{-2}=c_0+c_1|n|^{-2}$. This is incompatible with $a\in\ell^2(\Z^2)$ and therefore yields Theorem~\ref{thm 1}.  

    \begin{remark}
    The probabilistic and Fourier-geometric parts of the argument only require $a\in\ell^2(\Z^2)$. Consequently, the classification extends to $\sigma=0$ provided that one has a measurable solution flow on a full $\mu_a$-measure set whose trajectories conserve the $L^2$ vorticity and satisfy the Fourier-mode evolution identity used above.
    \end{remark}

    \begin{remark}
    The approach in \cite{BL-26} studies the short-time variation of averaged Sobolev norms. More precisely, their non-invariance criterion is obtained from an expansion of the form
    \begin{equation*}
    \E\|\Phi_t(\Omega_{in}^\omega)\|_{H^s}^2-\E\|\Omega_{in}^\omega\|_{H^s}^2=\gamma_{s,a}t^2+O(t^3).
    \end{equation*}
    Here $\gamma_{s,a}$ is an explicit functional of the covariance coefficients. Its sign describes the direction of the second-order variation, while the condition $\gamma_{s,a}\neq0$ for some admissible $s$ is sufficient to rule out invariance for $\sigma>3$; see \cite[Theorem A]{BL-26}.

    The present proof instead starts directly from invariance and extracts algebraic restrictions on the Fourier support and the covariance, which arise from the conditional-drift identity and the finite-dimensional Liouville relation.
    
    \end{remark}

	\subsection{Review of the literature}\label{Sec 1.4}

    The classical phenomenology of two-dimensional turbulence predicts a dual cascade: kinetic energy is transferred toward large scales, while enstrophy is transported toward small scales \cite{Kraichnan-67,Batchelor-69}; see also the surveys \cite{BE-12,Tabeling-02}. Much of this theory concerns forced-dissipative Navier--Stokes dynamics in a statistically stationary regime. For randomly forced 2D Navier--Stokes equations, ergodicity and mixing of the associated Markov dynamics provide a rigorous framework for describing such long-time statistics; see e.g. \cite{FM-95,EMS-01,KS-00,HM-06} and references therein. On the Lagrangian side, recent work has established chaotic particle dynamics and the Batchelor spectrum \cite{BBPS-22a,BBPS-22c}.
    
    By contrast, random initial data for the inviscid Euler flow are more closely related to decaying turbulence, where transient enstrophy transfer and the emergence of large-scale coherent vortices are commonly observed \cite{RDCE-03,Tabeling-02}. This statistical viewpoint motivates the study of how probability laws of random vorticity are transported or preserved by the Euler dynamics. From a deterministic viewpoint, the long-time dynamics near Euler steady states have also been widely studied. Inviscid damping near shear flows and axi-symmetrization near point vortices provide two representative relaxation mechanisms; see, e.g., \cite{BM-15,IJ-20,IJ-23}.

    \vspace{0.6em}

    Invariant measures provide a rigorous framework for Euler statistics. More broadly, following Bourgain's seminal work \cite{Bourgain-94}, probabilistic Cauchy theory and invariant-measure arguments have become central tools for constructing almost-sure global dynamics in nonlinear dispersive equations; see, e.g., \cite{BT-08,BT-14,CO-12,DNY-24}. For the 2D Euler equation, explicit Gibbs-type and white-noise Gaussian invariant measures have been constructed at low regularity \cite{AC-90,Flandoli-18}.

    A different construction is provided by the fluctuation-dissipation method, which builds invariant measures for randomly forced viscous approximations and then takes an inviscid limit \cite{Kuksin-04}. Using hyperviscous approximations, Latocca \cite{Latocca-23} constructed non-atomic Euler invariant measures  concentrated on $H^s$ velocity fields for every $s>2$. As these measures are obtained through compactness, their precise structure remains largely unknown; in particular, it is not known whether they are Gaussian or supported entirely on steady solutions.

    \vspace{0.6em}

	The remainder of the paper is organized as follows. Section Section~\ref{Sec 2} collects the well-posedness results and establishes the conditional-drift reduction used below. In Section \ref{Sec 3}, we establish the geometric structure of invariant Gaussian measures through the closure of the active Fourier support and the compatibility relation on active triples. Finally, we combine these ingredients to prove the two classification theorems in Section \ref{Sec 4}.

	\section{Probabilistic setup and conditional reduction}\label{Sec 2}

	In this section, we first summarize the mathematical setup for the Euler equation with Gaussian initial data.  Additionally, we identify the conditional drift of the Euler nonlinearity with its finite-dimensional Galerkin projection.

	\vspace{0.6em}

	We use the following Fourier conventions. For $n=(n_1,n_2)\in\Z^2_*$, write
	\begin{equation*}
	n^\perp=(-n_2,n_1),\quad |n|^2=n_1^2+n_2^2.
	\end{equation*}
    
	The normalized Fourier coefficients for zero mean functions are denoted by
	\begin{equation*}
	L_0^2(\T^2)=\left\{f(x)=\sum_{n\in\Z^2_*}f_ne^{in\cdot x}\in L^2(\T^2):\int_{\T^2}fdx=0\right\},\quad f_n=\frac1{(2\pi)^2}\int_{\T^2}f(x)e^{-in\cdot x}dx,
	\end{equation*}
	where $L_0^2(\T^2)$ is endowed with the usual $L^2$-inner product 
	\begin{equation*}
	\langle f,g\rangle_{L^2}=\int_{\T^2}f(x)g(x)dx,\quad\|f\|_{L^2}^2=(2\pi)^2\sum_{n\in\Z^2_*}|f_n|^2.
	\end{equation*} 

    For $\sigma>0$, we denote by $H^\sigma(\T^2)$ the  Sobolev space
	\begin{equation*}
	H^\sigma(\T^2)=\left\{f\in L^2(\T^2):\|f\|_{H^\sigma}<\infty\right\},\quad\|f\|_{H^\sigma}^2=\sum_{n\in\Z^2}(1+|n|^2)^\sigma|f_n|^2.
	\end{equation*}
	Note that for $a=\{a_n\}_{n\in\Z_*^2}\in h^\sigma$, the random initial satisfies $\Omega_{in}^\omega\in H^\sigma(\T^2)$ almost surely. Thus $\mu_a$ is a Gaussian measure on $H^\sigma(\T^2)$.

	\subsection{The bilinear form and well-posedness}\label{Sec 2.1} Let us define the Euler bilinear form initially for smooth zero-mean functions $f,g\colon \T^2\rightarrow\R$ by 
	\begin{equation*}
	U[f]:=\nabla^\perp(-\Delta)^{-1}f,\quad B(f,g):=U[f]\cdot\nabla g,\quad B(f):=B(f,f).
	\end{equation*}

	In this notation, equation \eqref{eq euler} can be rewritten as $\partial_t\Omega=-B(\Omega)$. For each $k\in\Z^2_*$, a direct computation gives
	\begin{equation}\label{eq B}
	B(f,g)_k=-\sum_{\substack{p+q=k,\;p,q\in\Z^2_*}}\frac{q\cdot p^\perp}{|p|^2}f_pg_q.
	\end{equation}

	To describe the structure of the bilinear form, for any $p,q\in\Z^2_*$, we set
	\begin{equation}\label{eq kappa}
	\kappa(p,q):=\frac12(q\cdot p^\perp)\left(\frac1{|q|^2}-\frac1{|p|^2}\right).
	\end{equation}
	Note that $\kappa(p,q)=\kappa(q,p)$, and  $\kappa(p,q)=0$ if and only if $(p,q)$ is a degenerate pair as in Definition \ref{def 2}. Pairing the terms indexed by $(p,q)$ and $(q,p)$ in \eqref{eq B}, we derive 
	\begin{equation}\label{eq B2}
	B(f)_k=\sum_{\substack{p+q=k,\;p,q\in\Z^2_*}}\kappa(p,q)f_pf_q,\quad k\in\Z^2_*.
	\end{equation}

	Additionally, we have the following estimate.
	\begin{lemma} \label{lemma B}
	For any $k\in\Z^2_*$ and $f,g\in L_0^2(\T^2)$, it follows that
	\begin{equation*}
	|B(f,g)_k|\leq \frac{|k|}{4\pi^2}\|f\|_{L^2}\|g\|_{L^2}.
	\end{equation*}
	Consequently, the fixed Fourier coefficient $B(f,g)_k$, initially defined for smooth functions with zero mean, extends uniquely and continuously to $L_0^2(\T^2)\times L_0^2(\T^2)$.
	\end{lemma}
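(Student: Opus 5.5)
We estimate the coefficient directly from the Fourier representation \eqref{eq B} and Cauchy--Schwarz, and then extend by density.

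For smooth zero-mean $f,g$, \eqref{eq B} gives
\begin{equation*}
|B(f,g)_k|\le\sum_{p+q=k,\;p,q\in\Z^2_*}\frac{|q\cdot p^\perp|}{|p|^2}|f_p||g_q|.
\end{equation*}
We need a multiplier bound that is uniform in $p$. Since $q=k-p$ and $p\cdot p^\perp=0$, we have $q\cdot p^\perp=k\cdot p^\perp$, so $|q\cdot p^\perp|\le|k||p|$. Hence
\begin{equation*}
\frac{|q\cdot p^\perp|}{|p|^2}\le\frac{|k|}{|p|}\le|k|,
\end{equation*}
because $|p|\ge1$ for $p\in\Z^2_*$. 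This cancellation, which replaces the possibly large $|q|$ by the fixed $|k|$, is the only nontrivial point.

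Next we apply Cauchy--Schwarz to the convolution sum over $p$, with $q=k-p$:
\begin{equation*}
\sum_p|f_p||g_{k-p}|\le\Bigl(\sum_p|f_p|^2\Bigr)^{1/2}\Bigl(\sum_q|g_q|^2\Bigr)^{1/2}.
\end{equation*}
By the normalization $\|f\|_{L^2}^2=(2\pi)^2\sum_n|f_n|^2$, the right-hand side equals $(2\pi)^{-2}\|f\|_{L^2}\|g\|_{L^2}$. Combining this with the multiplier bound gives
\begin{equation*}
|B(f,g)_k|\le\frac{|k|}{4\pi^2}\|f\|_{L^2}\|g\|_{L^2},
\end{equation*}
which is the stated estimate.

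For the extension, fix $k$. The map $(f,g)\mapsto B(f,g)_k$ is bilinear on the smooth zero-mean functions, which form a dense subspace of $L_0^2(\T^2)$. By the estimate above it is bounded there, and a bounded bilinear form on a dense subspace extends uniquely to a continuous bilinear form on the whole space. Concretely, if $f^j\to f$ and $g^j\to g$ in $L^2$, then
\begin{equation*}
B(f^j,g^j)_k-B(f^i,g^i)_k=B(f^j-f^i,g^j)_k+B(f^i,g^j-g^i)_k,
\end{equation*}
and the estimate shows that this sequence is Cauchy. The limit is independent of the approximating sequences, which gives uniqueness. The extended form is given by the same absolutely convergent series \eqref{eq B}. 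No step here is a real obstacle; the identity $q\cdot p^\perp=k\cdot p^\perp$ is the key observation.
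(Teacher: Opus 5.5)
Your proof is correct and is essentially the paper's argument written in Fourier variables. Your identity $q\cdot p^\perp=k\cdot p^\perp$ is the Fourier form of the divergence-free rewriting $B(f,g)=\nabla\cdot(U[f]g)$, which the paper integrates by parts to move the derivative onto $e^{-ik\cdot x}$. Your Cauchy--Schwarz bound on the convolution together with $|p|\ge1$ plays the role of the paper's H\"older estimate $\|U[f]g\|_{L^1}\le\|U[f]\|_{L^2}\|g\|_{L^2}$ combined with $\|U[f]\|_{L^2}\le\|f\|_{L^2}$, and it gives the same constant $|k|/(4\pi^2)$.
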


	\begin{proof}
	Since $U[f]$ is divergence-free,
	\begin{equation*}
	B(f,g)=\nabla\cdot\left(U[f]g\right).
	\end{equation*}
	Using the normalized Fourier coefficients fixed above and integrating by parts, we obtain
	\begin{equation*}
	\begin{aligned}
	B(f,g)_k=\frac1{(2\pi)^2}\int_{\T^2}\nabla\cdot\left(U[f]g\right)e^{-ik\cdot x}dx=\frac{i}{(2\pi)^2}\int_{\T^2}k\cdot U[f]g\,e^{-ik\cdot x}dx.
	\end{aligned}
	\end{equation*}

	Therefore, using H\"older's inequality, one has
	\begin{equation}\label{eq B-estimate}
	|B(f,g)_k|\leq\frac{|k|}{4\pi^2}\|U[f]g\|_{L^1}\leq\frac{|k|}{4\pi^2}\|U[f]\|_{L^2}\|g\|_{L^2}.
	\end{equation}
	Since $f$ has zero mean, Parseval's identity yields
	\begin{equation*}
	\|U[f]\|_{L^2}^2=(2\pi)^2\sum_{n\in\Z^2_*}\frac{|f_n|^2}{|n|^2}\leq(2\pi)^2\sum_{n\in\Z^2_*}|f_n|^2=\|f\|_{L^2}^2.
	\end{equation*}
	Combining this estimate with \eqref{eq B-estimate}, we complete the proof of Lemma \ref{lemma B}.
	\end{proof}

    The following proposition summarizes the basic well-posedness and regularity for equation \eqref{eq euler} with Gaussian initial data \eqref{eq Omega0}.

	\begin{proposition}\label{prop wp} Under assumptions  $(\mathbf{A_1})$ and $(\mathbf{A_2})$, for equation \eqref{eq euler},\eqref{eq Omega0}, let $\sigma>0$ and $a=\{a_n\}_{n\in\Z^2_*}\in h^\sigma$. Then  for any $\alpha\in(0,\sigma\wedge1)$, 
	\begin{equation*}
	\Omega_{in}^\omega\in c_0^\alpha(\T^2):=\overline{C^\infty(\mathbb{T}^2)\cap L^2_0(\mathbb{T}^2)}^{C^\alpha(\mathbb{T}^2)}
	\quad \text{almost surely}.
	\end{equation*}
    Moreover, for any $\Omega_{in}\in c_0^\alpha(\T^2)$, equation \eqref{eq euler} admits a unique global solution $\Omega\in C(\R;c_0^\alpha(\T^2))$, which defines a Borel flow
	\begin{equation*}
	\Phi_t: c_0^\alpha(\T^2)\rightarrow c_0^\alpha(\T^2),\quad\Phi_t(\Omega_{in}):=\Omega(t),\quad t\in\R.
	\end{equation*}
    For each $k\in\mathbb{Z}^2_*$, one has $\Omega_k\in C^1(\mathbb{R})$ and
	\begin{equation*}
	\frac{d}{dt}\Omega_k(t)=-B(\Omega(t))_k.
	\end{equation*}
	\end{proposition}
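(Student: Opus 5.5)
The proof has three parts, taken in order: almost-sure Hölder regularity of the random data, the deterministic Hölder flow, and the Fourier-mode identity.

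\emph{Regularity.} I will use a Kolmogorov-type argument on increments. Since $\Omega_{in}^\omega$ is a centered Gaussian field, for $x,y\in\T^2$ we have
\begin{equation*}
\E|\Omega_{in}^\omega(x)-\Omega_{in}^\omega(y)|^2\lesssim\sum_{n}|a_n|^2|e^{in\cdot x}-e^{in\cdot y}|^2\lesssim\sum_n|a_n|^2\min(1,|n|^2|x-y|^2).
\end{equation*}
For $\beta<\sigma\wedge 1$ we bound $\min(1,|n|^2|x-y|^2)\le(|n||x-y|)^{2\beta}$, which gives $\E|\Omega_{in}(x)-\Omega_{in}(y)|^2\lesssim\|a\|_{h^\sigma}^2|x-y|^{2\beta}$. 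Gaussian hypercontractivity upgrades this to all $p$-th moments, and the Kolmogorov continuity theorem (or Garsia--Rodemich--Rumsey) then yields $\Omega_{in}\in C^{\beta'}$ for every $\beta'<\beta$. Given $\alpha<\sigma\wedge1$, I choose $\alpha<\beta'<\beta<\sigma\wedge 1$; the compact embedding $C^{\beta'}\hookrightarrow c_0^\alpha$ (the little Hölder space contains $C^{\beta'}$ for $\beta'>\alpha$) gives $\Omega_{in}\in c_0^\alpha$ almost surely. The zero mean is automatic since there is no $n=0$ mode. Equivalently, one can estimate Littlewood--Paley blocks $\|\Delta_j\Omega_{in}\|_{L^\infty}$ via Gaussian tails and a union bound over a grid.

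\emph{Flow.} This step is classical and I will take it from the cited theory \cite{Wolibner-33,MB-02,MY-18}. For $C^\alpha$ vorticity the velocity is $C^{1,\alpha}$, hence Lipschitz, so the Lagrangian flow exists, $\Omega$ is transported, and global existence follows from conservation of $\|\Omega\|_{L^\infty}$ together with the Grönwall-type bound on the $C^\alpha$ norm. Continuity in time with values in $c_0^\alpha$ (not merely $C^\alpha$) uses that $c_0^\alpha$ is the closure of smooth functions: I approximate by smooth data and use stability. Continuous dependence on the data is available in the $C^\alpha$ topology on bounded sets, so $\Phi_t$ is continuous and in particular Borel.

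\emph{Mode identity.} I pair the equation, in its weak form $\partial_t\Omega+\nabla\cdot(u\Omega)=0$, with the test function $e^{-ik\cdot x}$. Weak solutions satisfy
\begin{equation*}
\Omega_k(t)-\Omega_k(0)=-\int_0^tB(\Omega(s))_k\,ds,
\end{equation*}
where $B(\cdot)_k$ is the continuous extension from Lemma~\ref{lemma B}. Since $\Omega\in C(\R;L^2)$ and $B(\cdot)_k$ is continuous on $L^2_0\times L^2_0$, the integrand is continuous in $s$. The fundamental theorem of calculus then gives $\Omega_k\in C^1$ with the stated derivative.

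The main obstacle is the continuity of $t\mapsto\Omega(t)$ in $c_0^\alpha$. Transport by a non-smooth flow map loses regularity in the $C^\alpha$ norm, so continuity holds only in the little Hölder space. I would handle this by approximating the data with smooth functions and using uniform $C^\alpha$ bounds on finite time intervals, as in \cite{MY-18}.
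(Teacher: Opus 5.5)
Your proposal is correct. The second and third parts match the paper. The H\"older flow is imported from \cite{Wolibner-33,MB-02,MY-18}; the paper uses the Hadamard well-posedness of \cite[Theorem~1.2]{MY-18} in the little H\"older space, transferred to $\T^2$, plus conservation of $\|\Omega\|_{L^\infty}$ for global existence. The mode identity follows, as you say, from $\Omega\in C(\R;L^2_0)$, the bound in Lemma~\ref{lemma B}, and the integrated equation.

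The genuine difference is the regularity step. The paper cites Kahane's theory of random Fourier series \cite{Kahane-93} to get $\Omega_{in}^\omega\in W^{\sigma,p}(\T^2)$ almost surely for every finite $p$. It then uses the embedding $W^{\sigma,p}\hookrightarrow C^\beta$ for $\beta<\sigma-2/p$. You instead compute the increment variance $\E|\Omega_{in}(x)-\Omega_{in}(y)|^2=2\sum_n|a_n|^2|e^{in\cdot x}-e^{in\cdot y}|^2\lesssim\|a\|_{h^\sigma}^2|x-y|^{2\beta}$ for $\beta\le\sigma\wedge1$. You pass to $p$-th moments by Gaussianity (plain Gaussian moment equivalence suffices; hypercontractivity is not needed) and apply Kolmogorov's criterion.

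Your route is more elementary and self-contained, since it needs only a second-moment computation. Its small extra cost is that Kolmogorov produces a continuous modification of the pointwise-defined field. You should identify it with the $L^2(\T^2)$-valued random variable given by the series, via a Fubini argument on the partial sums. The Sobolev-embedding route gives the continuous representative of the same $L^2$ element directly. Both routes lose an arbitrarily small amount of regularity and lead to the same constraint $\alpha<\sigma\wedge1$.

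One phrasing to tighten: continuity of $\Phi_t$ should be asserted for data in $c_0^\alpha$, measured in the $C^\alpha$ norm, which is what \cite{MY-18} provides. Continuous dependence is not available on bounded subsets of the full space $C^\alpha$, and that is exactly why the flow is set up on the little H\"older space.
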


    \begin{proof}
	By the regularity theory of Gaussian Fourier series \cite{Kahane-93}, for any $p\geq2$, one has
	\begin{equation*}
	\Omega_{in}^\omega\in W^{\sigma,p}(\T^2)\quad\text{almost surely},\quad\E\left\|\Omega_{in}^\omega\right\|_{W^{\sigma,p}}^2<\infty.
	\end{equation*}
	Fix $\alpha\in(0,\sigma\wedge1)$ and choose $\beta\in(\alpha,\sigma\wedge1)$. Thus by taking $p<\infty$ sufficiently large such that $\beta<\sigma-2/p$, the Sobolev--Morrey embedding  $W^{\sigma,p}(\T^2)\hookrightarrow C^\beta(\T^2)\hookrightarrow c^\alpha(\T^2)$ then ensures that $\Omega_{in}^\omega\in c_0^\alpha(\T^2)$ almost surely. 
    
    The remaining assertions are deterministic and apply pathwise. The periodic Biot--Savart operator maps $c_0^\alpha(\T^2)$ continuously into the divergence-free subspace of $c^{1,\alpha}(\T^2)$. Hence the local Hadamard well-posedness argument of \cite[Theorem~1.2]{MY-18}, stated on $\R^2$, carries over to $\T^2$: the periodic kernel has the same local singularity as the Euclidean one, and the required Schauder and Lagrangian estimates remain unchanged. In two dimensions, conservation of the vorticity $L^\infty$-norm and the standard continuation criterion extend the solution globally; see \cite[Remark~1.3]{MY-18} and also \cite{MB-02,Wolibner-33}. The continuous dependence on the initial datum then makes each $\Phi_t$ continuous, and hence Borel measurable.
    
    Finally, noting that $\Omega\in C(\mathbb{R};c^\alpha_0(\mathbb{T}^2))\subset C(\mathbb{R};L^2_0(\mathbb{T}^2))$ and using Lemma \ref{lemma B}, one has
	\begin{equation*}
	\left|B(\Omega(t))_k-B(\Omega(s))_k\right|
	\leq\frac{|k|}{4\pi^2}\left(\|\Omega(t)\|_{L^2}+\|\Omega(s)\|_{L^2}\right)\|\Omega(t)-\Omega(s)\|_{L^2},
	\end{equation*}
	and thus $t\mapsto B(\Omega(t))_k$ is continuous. Meanwhile, by \eqref{eq euler}, we obtain
	\begin{equation*}
	\Omega_k(t)=\Omega_k(0)-\int_0^tB(\Omega(s))_kds,\quad t\in\mathbb{R},
	\end{equation*}
	which thus ensures $\Omega_k\in C^1(\mathbb{R})$ and the needed relation. This completes the proof.
    \end{proof}

	\subsection{Conditional drift and Galerkin projection}\label{Sec 2.2} 
    
    We now pass from the full Euler drift to its finite-dimensional conditional version. This reduction is the point at which the independence of the Gaussian Fourier coefficients enters the proof.

    \vspace{0.6em}
    Let $\sigma>0$ and  $a=\{a_n\}_{n\in\Z^2_*}\in h^\sigma$. By definition, the active Fourier support $\mathcal S(a)$ is symmetric. Let $\Lambda=-\Lambda\subset\mathcal S(a)$ be a finite and symmetric set in $\Z^2_*$ and set
	\begin{equation*}
	\Lambda_+=\Lambda\cap\Z^2_+,\quad\mathcal F_\Lambda=\sigma(g_j^\omega:j\in\Lambda_+).
	\end{equation*}
	For $f\in L^2(\T^2)$, define
	\begin{equation*}
	P_\Lambda f(x):=\sum_{j\in\Lambda}f_je^{ij\cdot x}.
	\end{equation*}
	For $k\in\Z^2_*$, using \eqref{eq kappa},\eqref{eq B2}, we define the finite Galerkin sum by
	\begin{equation}\label{eq B-lambda}
	B_k^\Lambda(f):=B(P_\Lambda f)_k
	=\sum_{\substack{p+q=k,\;p,q\in\Lambda}}\kappa(p,q)f_pf_q.
	\end{equation}

	\begin{lemma}\label{lemma B2}
	For any $k\in\Z^2_*$ and finite symmetric $\Lambda\subset\mathcal S(a)$, it follows that
	\begin{equation*}
	\E\left[B(\Omega_{in}^\omega)_k|\mathcal F_\Lambda\right]=B_k^\Lambda(\Omega_{in}^\omega)\quad\text{in }L^2(\Xi).
	\end{equation*}
	\end{lemma}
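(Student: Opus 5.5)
We will prove Lemma~\ref{lemma B2} by truncating to finitely many Fourier modes, computing the conditional expectation explicitly there, and then passing to the limit in $L^2(\Xi)$.

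\textbf{Step 1: finite truncation.} For $N\in\N$ let $\Omega^{N}:=P_{\{|n|\le N\}}\Omega_{in}^\omega$. Lemma~\ref{lemma B} gives
\begin{equation*}
|B(\Omega_{in}^\omega)_k-B(\Omega^N)_k|\le\frac{|k|}{4\pi^2}\bigl(\|\Omega_{in}^\omega\|_{L^2}+\|\Omega^N\|_{L^2}\bigr)\|\Omega_{in}^\omega-\Omega^N\|_{L^2}.
\end{equation*}
Gaussian moment bounds, namely $\E\|\Omega_{in}^\omega\|_{L^2}^4<\infty$, together with dominated convergence then give
\begin{equation*}
B(\Omega^N)_k\to B(\Omega_{in}^\omega)_k\quad\text{in }L^2(\Xi).
\end{equation*}
Since conditional expectation is an $L^2$-contraction, it suffices to prove
\begin{equation*}
\E[B(\Omega^N)_k\mid\mathcal F_\Lambda]=B_k^\Lambda(\Omega_{in}^\omega)\quad\text{for all } N\ge\max_{\Lambda}|j|.
\end{equation*}
Note that $B^\Lambda_k(\Omega^N)=B^\Lambda_k(\Omega_{in}^\omega)$ for such $N$.

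\textbf{Step 2: finite computation.} By \eqref{eq B2},
\begin{equation*}
B(\Omega^N)_k=\sum_{p+q=k,\ |p|,|q|\le N}\kappa(p,q)\,a_pa_q\,g_pg_q,
\end{equation*}
which is a finite sum. We split the pairs $(p,q)$ into three classes.
\begin{itemize}[leftmargin=2.8em]
\item \emph{Both $p,q\in\Lambda$.} Then $g_pg_q$ is $\mathcal F_\Lambda$-measurable, because $g_{-j}=\overline{g_j}$. These terms reproduce exactly $B^\Lambda_k$ in \eqref{eq B-lambda}.
\item \emph{Exactly one index in $\Lambda$}, say $p\in\Lambda$ and $q\notin\Lambda$. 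By $(\mathbf{A_1})$, $g_q$ is independent of $\mathcal F_\Lambda$ and has mean zero, and $\Lambda$ is symmetric. Hence $\E[g_pg_q\mid\mathcal F_\Lambda]=g_p\E[g_q]=0$. Terms with $a_q=0$ vanish anyway.
\item \emph{Neither index in $\Lambda$.} Then $g_pg_q$ is independent of $\mathcal F_\Lambda$, so its conditional expectation equals $\E[g_pg_q]$. This expectation is nonzero only when $q=-p$. In that case $k=p+q=0$, which is excluded since $k\in\Z^2_*$. One can also note $\kappa(p,-p)=0$.
\end{itemize}
Linearity of conditional expectation on the finite sum then gives the claim.

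\textbf{Main obstacle.} The only delicate point is the independence bookkeeping under the reality constraint $g_{-n}=\overline{g_n}$. One has to check that $g_q$ and $g_{-q}$ generate the same $\sigma$-algebra, and that for $q\notin\Lambda$ (equivalently $-q\notin\Lambda$) the pair $(r_q,s_q)$ or $(r_{-q},s_{-q})$ is independent of $\mathcal F_\Lambda$. One also needs $\E[g_pg_q]=0$ unless $q=-p$, including the case $p=q$, where $\E[g_p^2]=\E[r_p^2-s_p^2]=0$. These checks follow directly from $(\mathbf{A_1})$. The $L^2$ truncation limit in Step 1 is routine.
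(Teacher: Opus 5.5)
Your proposal is correct and follows essentially the same route as the paper: truncate to $|n|\le N$, use Lemma~\ref{lemma B} with fourth Gaussian moments to get $L^2(\Xi)$ convergence of $B(\Omega^N)_k$, evaluate the conditional expectation term by term in the same three cases (with $q=-p$ excluded by $k\neq0$), and pass to the limit via the $L^2$-contraction property of conditional expectation. The only cosmetic difference is how you get the $L^2(\Xi)$ convergence. You use dominated convergence with dominating function $C\|\Omega_{in}^\omega\|_{L^2}^4$, which works since $\|\Omega^N\|_{L^2}\le\|\Omega_{in}^\omega\|_{L^2}$. The paper instead uses Cauchy--Schwarz together with $\E\|\Omega^N-\Omega_{in}^\omega\|_{L^2}^4\to0$.
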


	\begin{proof}
	For $N\geq1$, set
	\begin{equation*}
	\Omega_{in,N}^\omega(x):=\sum_{\substack{n\in\Z^2_*,\;|n|\leq N}}a_ng_n^\omega e^{in\cdot x}.
	\end{equation*}
	Recall that $a\in h^\sigma\subset\ell^2(\Z^2_*)$. Then using the independence of the Gaussian variables, one derives 	\begin{equation}\label{eq OmegaN}
	\lim\limits_{N\rightarrow\infty}\E\|\Omega_{in,N}^\omega-\Omega_{in}^\omega\|_{L^2}^4=0,\quad \sup_{N\geq1}\E\|\Omega_{in,N}^\omega\|_{L^2}^4\leq C(\E\|\Omega_{in}^\omega\|_{L^2}^2)^2<\infty.
	\end{equation}

	Meanwhile, by the bilinearity of $B$ and Lemma~\ref{lemma B}, we have 
	\begin{align*}
	|B(\Omega_{in,N}^\omega)_k-B(\Omega_{in}^\omega)_k|&\leq |B(\Omega_{in,N}^\omega-\Omega_{in}^\omega,\Omega_{in,N}^\omega)_k|+|B(\Omega_{in}^\omega,\Omega_{in,N}^\omega-\Omega_{in}^\omega)_k|\\
	&\leq \frac{|k|}{4\pi^2}\left(\|\Omega_{in,N}^\omega\|_{L^2}+\|\Omega_{in}^\omega\|_{L^2}\right)\|\Omega_{in,N}^\omega-\Omega_{in}^\omega\|_{L^2}.
	\end{align*}    
	Therefore, applying \eqref{eq OmegaN}, we obtain
    \begin{equation*}
        \E|B(\Omega_{in,N}^\omega)_k-B(\Omega_{in}^\omega)_k|^2\leq C_k(\E\|\Omega_{in}^\omega\|_{L^2}^4)^{1/2}(\E\|\Omega_{in,N}^\omega-\Omega_{in}^\omega\|_{L^2}^4)^{1/2}
    \end{equation*}
    which implies that 
	\begin{equation}\label{eq OmegaN2}
	B(\Omega_{in,N}^\omega)_k\longrightarrow B(\Omega_{in}^\omega)_k
	\quad\text{in }L^2(\Xi).
	\end{equation}

	Let us then fix $N$ large enough such that $\Lambda\subset\{|n|\leq N\}$. The $p$-th Fourier mode of $\Omega_{in,N}^\omega$ is $a_pg_p^{\omega}e^{ip\cdot x}$, whose Fourier coefficient is $a_pg_p^\omega$. By the Fourier representation \eqref{eq B2}, we have
	\begin{equation}\label{eq BN}
	B(\Omega_{in,N}^\omega)_k=\sum_{\substack{p+q=k,\;p,q\in\Z^2_*,\;|p|,|q|\leq N}}\kappa(p,q)a_pa_qg_p^{\omega}g_q^{\omega}.
	\end{equation}
	For any $p,q\in\Z^2_*$ with $|p|,|q|\leq N$, by the Gaussian structure and the symmetry of $\Lambda$, we derive
	\begin{equation}\label{eq Gaussian}
	\E\left[g_p^{\omega}g_q^{\omega}|\mathcal F_\Lambda\right]=\begin{cases}
	g_p^{\omega}g_q^{\omega},&p,q\in\Lambda,\\
	0,&p\in\Lambda,q\notin\Lambda\text{ or }p\notin\Lambda,q\in\Lambda,\\
	2\mathbf{1}_{\{q=-p\}},&p,q\notin\Lambda.
	\end{cases}
	\end{equation}
	Indeed, the first product in \eqref{eq Gaussian} is $\mathcal F_\Lambda$-measurable. In the second case, the factor outside $\Lambda$ is centered and independent of $\mathcal F_\Lambda$ and of the other factor. In the last case, both factors are independent of $\mathcal F_\Lambda$, and $\E\left[g_p^{\omega}g_q^{\omega}\right]=2\mathbf{1}_{\{q=-p\}}$.
    
	Since $p+q=k\neq0$, the relation $q=-p$ cannot occur in the sum \eqref{eq BN}. Therefore, applying \eqref{eq Gaussian} term by term in \eqref{eq BN}, one obtains
	\begin{align*}
	\E\left[B(\Omega_{in,N}^\omega)_k|\mathcal F_\Lambda\right]=\sum_{\substack{p+q=k,\;p,q\in\Lambda}}\kappa(p,q)a_pa_qg_p^{\omega}g_q^{\omega}=B_k^\Lambda(\Omega_{in}^\omega).
	\end{align*}
	Finally, noticing that conditional expectation is an $L^2(\Xi)$ contraction and using \eqref{eq OmegaN2}, we obtain
	\begin{align*}
	\E\left|\E\left[B(\Omega_{in,N}^\omega)_k-B(\Omega_{in}^\omega)_k|\mathcal F_\Lambda\right]\right|^2
	&\leq \E\left|B(\Omega_{in,N}^\omega)_k-B(\Omega_{in}^\omega)_k\right|^2\longrightarrow0\quad\text{as }N\rightarrow\infty.
	\end{align*}

	Combining the last two displays, we conclude that
	\begin{equation*}
	\E\left[B(\Omega_{in}^\omega)_k|\mathcal F_\Lambda\right]=B_k^\Lambda(\Omega_{in}^\omega)\quad\text{in }L^2(\Xi).
	\end{equation*}
	This completes the proof.
	\end{proof}

	\section{Rigidity properties of invariant measures}\label{Sec 3}

    This section examines the restrictions imposed by invariance on the active Fourier support and the associated variances. Proposition~\ref{prop clo} shows that the support is closed under every non-degenerate Euler interaction, while Proposition~\ref{prop mu} propagates an affine law for the inverse variances throughout the generated set. Together, these two properties provide the rigidity needed for the classification in Section~\ref{Sec 4}.
    
	\vspace{0.6em}

	Fix a non-degenerate pair $p,q\in\Z^2_*$. We define recursively
	\begin{align*}
	\mathcal C_0(p,q)&=\{\pm p,\pm q\},\\
	\mathcal C_{j+1}(p,q)&=\mathcal C_j(p,q)\cup\left\{\pm(m+n):m,n\in\mathcal C_j(p,q),(m,n)\text{ is non-degenerate}\right\},
	\end{align*}
	and set
	\begin{equation*}
	\mathcal C(p,q)=\bigcup_{j\in\N}\mathcal C_j(p,q).
	\end{equation*}
	Note that a non-degenerate pair cannot sum to zero, thus $0\notin\mathcal{C}(p,q)$. Additionally, the generated set is infinite. 
    \begin{lemma}\label{lemma C}
        Let $p,q\in\Z_*^2$ form a non-degenerate pair. Then the set $\mathcal C(p,q)$ is infinite.
    \end{lemma}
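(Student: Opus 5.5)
I will prove that $\mathcal C(p,q)$ contains lattice points of arbitrarily large norm, which forces it to be infinite. The idea is to iterate a single addition step that strictly increases the length while keeping a non-degenerate partner available.

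The basic observation is this. If $(m,n)$ is non-degenerate, then $m,n$ are not collinear, so $m$ and $-n$ are not collinear either. Moreover $|{-n}|=|n|\neq|m|$, so $(m,-n)$ is also non-degenerate. Both $m+n$ and $m-n$ therefore lie in $\mathcal C_{j+1}$ whenever $m,n\in\mathcal C_j$. By the parallelogram law,
\begin{equation*}
|m+n|^2+|m-n|^2=2|m|^2+2|n|^2 .
\end{equation*}
Hence one of $m\pm n$, call it $s$, satisfies $|s|^2\ge |m|^2+|n|^2>\max(|m|,|n|)^2$. So from a non-degenerate pair in $\mathcal C_j$ we obtain an element of $\mathcal C_{j+1}$ strictly longer than both members of the pair.

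The main step is to continue the iteration by producing a new non-degenerate pair that contains $s$. Suppose, say, $|m|>|n|$, and write $s=m+\epsilon n$. Then:
\begin{itemize}
\item $s$ is not collinear with $m$, because $s\cdot m^\perp=\epsilon\, n\cdot m^\perp\neq0$.
\item $s$ is not collinear with $n$, for the same reason.
\item $|s|>|m|>|n|$, so the lengths differ.
\end{itemize}
Thus $(s,m)$ is a non-degenerate pair in $\mathcal C_{j+1}$. If instead $|m|<|n|$, pair $s$ with $n$. Equality $|m|=|n|$ is impossible, since the pair is non-degenerate.

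Starting from $(p,q)$ and repeating this, I get a sequence $s_1,s_2,\dots\in\mathcal C(p,q)$ with $|s_{j+1}|>|s_j|$. These are infinitely many distinct points, so $\mathcal C(p,q)$ is infinite. Since each $|s_j|^2$ is an integer, they increase by at least $1$ at every step, so the norms are in fact unbounded.

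I expect no step to be a serious obstacle. The only point needing care is keeping non-degeneracy at every stage, which requires both non-collinearity and unequal lengths. Non-collinearity follows from the cross-product identity above, and unequal lengths from the strict growth of the norm.
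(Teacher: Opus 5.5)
Your proof is correct, but it is organized differently from the paper's. The paper argues by contradiction using an extremal element. It first notes that $\mathcal C(p,q)$ is closed under non-degenerate sums, assumes finiteness, and picks $n$ of maximal length $R$. It then splits into two cases. Either some mode non-collinear with $n$ is shorter than $R$, and adding it to $n$ (after a sign flip making the inner product nonnegative) gives a longer mode. Or every non-collinear mode has length exactly $R$; then a shorter mode $\ell$, which exists since $|p|\neq|q|$, must be collinear with $n$, and it is added to a length-$R$ mode instead. The second case is needed because at the maximal element the unequal-length half of non-degeneracy can be lost.

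You avoid this case analysis by carrying an invariant forward. At each stage you hold a non-degenerate pair whose newest member is strictly longer than its partner. The identity $s\cdot m^\perp=\epsilon\, n\cdot m^\perp$ together with strict norm growth regenerates that invariant at the next stage. Both arguments rest on the same fact, $|m+\epsilon n|^2=|m|^2+|n|^2+2\epsilon\, m\cdot n$ with $\epsilon$ chosen so the cross term is nonnegative.

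Your version is constructive and also quantitative. With your choice of partner, the pair used at stage $j\ge2$ is $(s_j,s_{j-1})$, so $|s_{j+1}|^2\ge|s_j|^2+|s_{j-1}|^2$. Hence $\mathcal C_j(p,q)$ contains modes whose squared length grows at least like the Fibonacci numbers in $j$, which the extremal argument does not give.

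Two small points. The claim $m-n\in\mathcal C_{j+1}(p,q)$ uses $-n\in\mathcal C_j(p,q)$, i.e.\ the symmetry $\mathcal C_j(p,q)=-\mathcal C_j(p,q)$; this is immediate by induction from the definition but should be stated. Also, your case split on $|m|>|n|$ versus $|m|<|n|$ is unnecessary: $s$ is longer than both and collinear with neither, so $(s,m)$ and $(s,n)$ are both non-degenerate.
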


    \begin{remark}
    Let us note that the construction of $\mathcal C(p,q)$ follows the same lattice-generation mechanism as the interaction closure in \cite[Section~4.1]{HM-06}. The viscous term considered there is diagonal in Fourier space and therefore does not affect this geometry. For completeness, we give a direct proof in Section \ref{Sec 3.3}. 
    \end{remark}

    The following proposition is the main structural rigidity result used to establish the classification of invariant measures, whose proof is placed at the end of this section.

	\begin{proposition}\label{prop mu}
	Under assumptions  $(\mathbf{A_1})$ and $(\mathbf{A_2})$, for equation \eqref{eq euler},\eqref{eq Omega0}, let $\sigma>0$ and $a=\{a_n\}_{n\in\Z^2_*}\in h^\sigma$. Assume that $\mu_a$ is invariant and that $p,q\in\mathcal S(a)$ form a non-degenerate pair. Then there exist constants $c_0,c_1\in\R$ such that
	\begin{equation}\label{eq mu}
	|a_n|^{-2}=c_0+\frac{c_1}{|n|^2}\quad \forall\,n\in\mathcal C(p,q).
	\end{equation}
	\end{proposition}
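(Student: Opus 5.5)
We first derive, from invariance, a compatibility relation on each active triple, and then propagate it along the recursive construction of $\mathcal C(p,q)$.

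\textbf{Step 1: a finite-dimensional Liouville identity.} Fix a finite symmetric $\Lambda\subset\mathcal S(a)$ and a bounded smooth cylindrical test function $F$ depending only on the real coordinates $\{\Omega_j\}_{j\in\Lambda_+}$. Invariance gives $\frac{d}{dt}\E F(\Phi_t\Omega_{in})=0$. Proposition~\ref{prop wp} makes $\Omega_k\in C^1$, and Lemma~\ref{lemma B} gives an integrable bound on $\partial_t\Omega_k$. So we may differentiate at $t=0$ and get $\E[\nabla F\cdot B(\Omega_{in})]=0$, where only the $k\in\Lambda$ components enter. Conditioning on $\mathcal F_\Lambda$ and using Lemma~\ref{lemma B2}, we may replace $B_k$ by the Galerkin drift $B^\Lambda_k$. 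This yields
\begin{equation*}
\int \nabla F\cdot B^\Lambda\,d\gamma_\Lambda=0,
\end{equation*}
where $\gamma_\Lambda$ is the finite-dimensional Gaussian with density proportional to $\exp(-\tfrac14\sum_{j\in\Lambda}|\Omega_j|^2/|a_j|^2)$. Integrating by parts, and noting that $B^\Lambda$ is divergence-free (its $k$-component does not involve $\Omega_{-k}$ variables in a way producing a nonzero divergence, since $\kappa(p,q)$ vanishes when $p$ or $q$ equals $-k$ paired with $2k$; this needs checking), we obtain the pointwise identity $B^\Lambda\cdot\nabla Q_a\equiv0$ on $\R^{\Lambda}$. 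That is,
\begin{equation*}
\sum_{k\in\Lambda}|a_k|^{-2}\,\overline{\Omega_k}\sum_{m+n=k,\,m,n\in\Lambda}\kappa(m,n)\Omega_m\Omega_n\equiv0 .
\end{equation*}

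\textbf{Step 2: extract the triple relation.} Apply Step~1 with $\Lambda=\{\pm m,\pm n,\pm(m+n)\}$, where $m,n,m+n\in\mathcal S(a)$ and $(m,n)$ is non-degenerate. Write $r=-(m+n)$. The cubic polynomial is a sum of monomials, and the coefficient of the monomial $\Omega_m\Omega_n\Omega_r$ must vanish. This gives
\begin{equation*}
\frac{\kappa(n,r)}{|a_m|^2}+\frac{\kappa(r,m)}{|a_n|^2}+\frac{\kappa(m,n)}{|a_r|^2}=0 .
\end{equation*}
Possible collisions, such as $m=\pm2n$, do not occur because $(m,n)$ is non-degenerate. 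On a triple, the factor $q\cdot p^\perp$ is the same up to sign for all three pairs, so we can cancel it and use the explicit formula \eqref{eq kappa}. The relation becomes
\begin{equation*}
\sum_{\text{cyc}}\Bigl(\frac1{|r|^2}-\frac1{|n|^2}\Bigr)\frac1{|a_m|^2}=0 .
\end{equation*}
This is exactly the vanishing of the determinant asserting that the three points $(|m|^{-2},|a_m|^{-2})$, $(|n|^{-2},|a_n|^{-2})$, $(|r|^{-2},|a_r|^{-2})$ are collinear.

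\textbf{Step 3: propagation by induction on $j$.} I would invoke the closure property announced as Proposition~\ref{prop clo}, namely $\mathcal C(p,q)\subset\mathcal S(a)$. If that is not yet available, I would prove it inline: if $k=m+n\notin\mathcal S(a)$, then invariance forces $\Omega_k(t)=0$ almost surely, so $\E|\partial_t\Omega_k(0)|^2=0$. But by Lemma~\ref{lemma B2} with $\Lambda=\{\pm m,\pm n\}$, this quantity is at least $\E|2\kappa(m,n)a_ma_ng_mg_n|^2>0$, a contradiction.

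Since $(p,q)$ is non-degenerate, $|p|\ne|q|$, so there is a unique line $y=c_0+c_1x$ through the points attached to $p$ and $q$. For the inductive step, suppose all points attached to $\mathcal C_j$ lie on this line. A new element $\pm(m+n)$ arises from a non-degenerate pair $m,n\in\mathcal C_j$ with $|m|\ne|n|$. Its point is collinear with those of $m$ and $n$, which are two distinct points of the line, so it also lies on the line. Evenness $a_{-k}=a_k$ handles the sign.

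\textbf{Main obstacle.} The delicate part is Step~1: justifying the time derivative of $\E F(\Phi_t\Omega_{in})$ at $t=0$ and the Gaussian integration by parts, and verifying that $B^\Lambda$ is divergence-free with respect to the real coordinates. I would settle the divergence claim by the observation that $\partial B^\Lambda_k/\partial\Omega_k$ involves $\kappa(k,0)$, which is absent because $0\notin\Lambda$, together with a check of the conjugate pairing.
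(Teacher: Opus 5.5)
Your proposal is correct and follows the paper's own proof essentially step for step. It uses the Liouville identity $\V^\Lambda\cdot\nabla Q_\Lambda=0$ of Lemma~\ref{lemma V}, obtained by the same divergence-free mechanism via $0\notin\Lambda$ and $\kappa(-k,2k)=0$; the triple relation of Proposition~\ref{prop triple} from the cubic coefficient; the closure of Proposition~\ref{prop clo}; and the induction on $j$ using $|m|\neq|n|$. The differences are cosmetic: you read off the coefficient of $\Omega_m\Omega_n\Omega_r$ treating $z_j,\overline{z_j}$ as independent indeterminates, whereas the paper restricts to the real slice $y=0$. Also, the uniform domination of the difference quotients in your Step 1 relies on conservation of $\|\Omega(t)\|_{L^2}$, which the paper invokes explicitly and you should state.
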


    \begin{remark}\label{rmk gibbs}
    The affine relation \eqref{eq mu} has a natural Gibbs interpretation. Indeed, the enstrophy and kinetic energy of a vorticity field $\Omega$ are formally given, up to normalization constants, by
    \begin{equation*}
    \mathcal Z(\Omega)
    =\frac12\sum_{n\in\mathbb Z^2_*}|\Omega_n|^2,\quad\mathcal E(\Omega)=\frac12\sum_{n\in\mathbb Z^2_*}\frac{|\Omega_n|^2}{|n|^2}.
    \end{equation*}
    
    Consequently, the formal energy--enstrophy Gibbs measure
    \begin{equation*}
    d\mu_{\alpha,\beta}(\Omega)=Z_{\alpha,\beta}^{-1}\exp\bigl(-\alpha\mathcal Z(\Omega)-\beta\mathcal E(\Omega)\bigr)\,d\Omega
    \end{equation*}
    is a diagonal Gaussian measure whose Fourier variances satisfy, after relabeling the constants,
    \begin{equation*}
    |a_n|^{-2}=c_0+\frac{c_1}{|n|^2}.
    \end{equation*}
    Thus, Proposition~\ref{prop mu} may be viewed as an interaction-generated Gibbs rigidity statement: the presence of one non-degenerate active pair forces the covariance to agree with the energy--enstrophy Gibbs law throughout $\mathcal C(p,q)$, although not necessarily on the entire Fourier lattice.

    The classical full-support Gibbs measures are too rough for the positive-regularity setting considered here. In the usual case $c_0>0$, one has $|a_n|^2\to c_0^{-1}$ as $|n|\to\infty$, so the corresponding vorticity has the regularity of spatial white noise and belongs only to $H^s$ for $s<-1$. If $c_0=0$, the variances grow like $|n|^2$, while $c_0<0$ is incompatible with positivity at sufficiently high frequencies. This incompatibility between the Gibbs-type covariance forced by Euler interactions and the assumption $a\in h^\sigma$, $\sigma>0$, is the basic rigidity mechanism underlying the classification.
    \end{remark}

	\subsection{Closure of the active Fourier support}\label{Sec 3.1}

	Let us first show that (quasi-)invariance forces the active Fourier support to be closed under every non-degenerate interaction.

	\begin{proposition}\label{prop clo}
	Under assumptions  $(\mathbf{A_1})$ and $(\mathbf{A_2})$, for equation \eqref{eq euler},\eqref{eq Omega0}, let $\sigma>0$, $a=\{a_n\}_{n\in\Z^2_*}\in h^\sigma$, and $p,q\in\mathcal S(a)$ form a non-degenerate pair. Assume that either $\mu_a$ is invariant or there exists a sequence $t_j\rightarrow0$, $t_j\neq0$, such that
	\begin{equation}\label{eq clo}
	(\Phi_{t_j})_*\mu_a\ll\mu_a\quad\forall\,j\in\N^+.
	\end{equation}
	Then $\mathcal{C}(p,q)\subset\mathcal S(a)$. 
	\end{proposition}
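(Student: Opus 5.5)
By induction on $j$, I will show that $\mathcal C_j(p,q)\subset\mathcal S(a)$ for every $j$. Since $\mathcal S(a)$ is symmetric, the base case $\mathcal C_0(p,q)=\{\pm p,\pm q\}\subset\mathcal S(a)$ holds by hypothesis. For the inductive step it suffices to prove the following one-step claim: if $m,n\in\mathcal S(a)$ form a non-degenerate pair, then $k:=m+n\in\mathcal S(a)$. Symmetry then also gives $-k\in\mathcal S(a)$.

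To prove the one-step claim, argue by contradiction and suppose $a_k=0$. Then $\Omega_k=0$ holds $\mu_a$-almost surely. Under invariance, $(\Phi_t)_*\mu_a=\mu_a$ gives $\Omega_k(t)=\Phi_t(\Omega_{in})_k=0$ almost surely for each fixed $t$. Under \eqref{eq clo}, the set $\{\Omega:\Omega_k=0\}$ has full $\mu_a$-measure, so its complement is $\mu_a$-null. By absolute continuity it is also $(\Phi_{t_j})_*\mu_a$-null, hence $\Omega_k(t_j)=0$ almost surely for every $j$. Intersecting countably many full-measure events, $\Omega_k(t_j)=0$ for all $j$ almost surely (in the invariant case, use any sequence $t_j\to0$). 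By Proposition~\ref{prop wp}, $t\mapsto\Omega_k(t)$ is $C^1$ with $\Omega_k(0)=0$. Therefore
\begin{equation*}
B(\Omega_{in})_k=-\frac{d}{dt}\Omega_k(0)=-\lim_{j\to\infty}\frac{\Omega_k(t_j)-\Omega_k(0)}{t_j}=0\quad\text{almost surely}.
\end{equation*}

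Now take $\Lambda=\{\pm m,\pm n\}\subset\mathcal S(a)$, which is finite and symmetric. Lemma~\ref{lemma B2} gives
\begin{equation*}
0=\E\bigl[B(\Omega^\omega_{in})_k\,|\,\mathcal F_\Lambda\bigr]=B^\Lambda_k(\Omega^\omega_{in}).
\end{equation*}
By \eqref{eq B-lambda}, $B^\Lambda_k$ is a sum over pairs $p',q'\in\Lambda$ with $p'+q'=k$. These pairs are $(m,n)$ and $(n,m)$, together with any coincidental ones. Pairs of the form $(x,-x)$ are excluded since $k\neq0$. Pairs such as $(2m',\cdot)$ cannot occur within $\Lambda$ unless, for instance, $k=2m$ with $n=m$; but that would make $(m,n)$ collinear, hence degenerate. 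The remaining possible coincidences are $-m+\ldots$ combinations, such as $-m+n=m+n$, which is impossible since $m\ne0$. So the only terms are $(m,n)$ and $(n,m)$, and
\begin{equation*}
B^\Lambda_k=2\kappa(m,n)\,a_m a_n\, g_m g_n.
\end{equation*}
Here $\kappa(m,n)\neq0$ by non-degeneracy and $a_ma_n\neq0$. However, $g_mg_n$ is a product of two nondegenerate complex Gaussians and vanishes only with probability zero, which is a contradiction. Hence $a_k\neq0$.

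The main obstacle is the careful bookkeeping in the last step. One must verify that no other pair in $\Lambda$ sums to $k$ in a way that could cancel the $(m,n)$ term; this is the step that uses non-degeneracy (which excludes collinearity). One must also handle possibilities such as $n=-m$ or $n=\pm m$, which are excluded because non-degenerate pairs are non-collinear. A second, more delicate point is the measure-theoretic transfer in the quasi-invariant case: the null set must be pulled through absolute continuity at each $t_j$, not just in the limit, before differentiating pathwise.
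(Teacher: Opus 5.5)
Your proof is correct and follows the paper's argument. You derive $\Omega_k(t_j)=0$ almost surely from (quasi-)invariance, differentiate at $t=0$ to get $B(\Omega_{in})_k=0$, then condition on $\mathcal F_\Lambda$ with $\Lambda=\{\pm m,\pm n\}$ via Lemma~\ref{lemma B2} to reach the contradiction $2\kappa(m,n)a_ma_ng_mg_n=0$. Stating the one-step claim for an arbitrary non-degenerate pair in $\mathcal S(a)$ simply spells out the induction that the paper leaves implicit after treating $\mathcal C_1(p,q)$.
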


	\begin{proof}
	If $\mu_a$ is invariant, then \eqref{eq clo} holds for any sequence of nonzero times converging to zero. Thus, in either case, we may fix a sequence $\{t_j\}_{j\in\N^+}$ satisfying \eqref{eq clo}. 
    
    Clearly, $\mathcal{C}_0(p,q)=\{\pm p,\pm q\}\subset \mathcal{S}(a)$.  We first show that $\mathcal{C}_1(p,q)=\mathcal{C}_0(p,q)\cup \{\pm (p\pm q)\}\subset \mathcal{S}(a)$.  By symmetry, it suffices to prove that $k=p+q$ belongs to $\mathcal S(a)$. Assume for contradiction that $k\notin\mathcal S(a)$ and set
	\begin{equation*}
	A_k=\left\{w\in L_0^2(\T^2):w_k=0\right\}.
	\end{equation*}
	Note that by assumption,  $\mu_a(A_k^c)=0$. Additionally, invoking \eqref{eq clo}, one has
	\begin{equation*}
	(\Phi_{t_j})_*\mu_a(A_k^c)=0\quad\forall\,j\in\N^+,
	\end{equation*}
	and hence
	\begin{equation*}
	\mu_a\left(\Phi_{t_j}^{-1}(A_k^c)\right)=0\quad\forall\,j\in\N^+.
	\end{equation*}    
	Therefore, as the extension $\Phi_{t_j}$ is Borel measurable, the set $E:=c_0^\alpha(\T^2)\cap A_k\cap\bigcap_{j\in\N^+}\Phi_{t_j}^{-1}(A_k)$ is measurable and satisfies $\mu_a(E)=1$. In particular, for equation \eqref{eq euler} with initial datum  $\Omega_{in}\in E$, it follows that
	\begin{equation*}
	\Omega_k(0)=0,\quad\Omega_k(t_j)=0\quad\forall\,j\in\N^+.
	\end{equation*}

	Meanwhile, note that $\Omega_k(\cdot)$ is continuously differentiable with
	\begin{equation*}
	\frac d{dt}\Omega_k(t)=-B(\Omega(t))_k.
	\end{equation*}
	Therefore,  combining these two relations above, we derive that
	\begin{equation*}	\partial_t\Omega_k(0)=\lim\limits_{j\rightarrow\infty}\frac{\Omega_k(t_j)-\Omega_k(0)}{t_j}=0=-B(\Omega_{in}^\omega)_k\quad\Pb\text{-almost surely}.
	\end{equation*}

	We now apply Lemma \ref{lemma B2} with  $\Lambda:=\{\pm p,\pm q\}$, which implies 
	\begin{equation}\label{eq clo1}
	0=\E\left[B(\Omega_{in}^\omega)_k|\mathcal F_\Lambda\right]=B_k^\Lambda(\Omega_{in}^\omega)\quad\Pb\text{-almost surely}.
	\end{equation}    
	On the other hand, we directly compute that 
	\begin{equation}\label{eq clo2}
	B_k^\Lambda(\Omega_{in}^\omega)=\sum_{\substack{p+q=k,\;p,q\in\Lambda}}\kappa(p,q)a_pa_qg_p^{\omega}g_q^{\omega}=(q\cdot p^\perp)\left(\frac1{|q|^2}-\frac1{|p|^2}\right)a_pa_qg_p^{\omega}g_q^{\omega}.
	\end{equation}

	Therefore, combining the non-degeneracy, $p,q\in\mathcal S(a)$ and \eqref{eq clo1},\eqref{eq clo2}, we have
	\begin{equation*}
	g_p^{\omega}g_q^{\omega}=0\quad\Pb\text{-almost surely},
	\end{equation*}
	which is a contradiction. This implies $\mathcal C_1(p,q)\subset\mathcal S(a)$.

    Consequently, we conclude that $\mathcal C(p,q)\subset\mathcal S(a)$ holds by induction.  This completes the proof of Proposition~\ref{prop clo}.
	\end{proof}

	\subsection{Affine compatibility on active triples}\label{Sec 3.2}

	We next turn from the geometry of the support to the covariance structure of the measure.

	\begin{proposition}\label{prop triple} 
	Under assumptions  $(\mathbf{A_1})$ and $(\mathbf{A_2})$, for equation \eqref{eq euler},\eqref{eq Omega0}, let $\sigma>0$ and $a=\{a_n\}_{n\in\Z^2_*}\in h^\sigma$. Assume that $\mu_a$ is invariant and that $p,q,r\in\mathcal S(a)$ form an active triple in the sense that
	\begin{equation*}
	p+q+r=0,\quad q\cdot p^\perp\neq0.
	\end{equation*}
	Then
	\begin{equation}\label{eq triple}
	\frac{1}{|a_r|^2}\left(\frac1{|p|^2}-\frac1{|q|^2}\right)+\frac{1}{|a_p|^2}\left(\frac1{|q|^2}-\frac1{|r|^2}\right)+\frac{1}{|a_q|^2}\left(\frac1{|r|^2}-\frac1{|p|^2}\right)=0,
	\end{equation} 
	Equivalently, the three points $(|p|^{-2},|a_p|^{-2})$, $(|q|^{-2},|a_q|^{-2})$, $(|r|^{-2},|a_r|^{-2})$ are collinear in $\R^2$.
	\end{proposition}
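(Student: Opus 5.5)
The plan is to turn invariance into an infinitesimal, finite-dimensional Liouville identity for the Galerkin field $B^\Lambda$ with $\Lambda:=\{\pm p,\pm q,\pm r\}$. The identity says that $B^\Lambda$ preserves the Gaussian density of $\mu_a$ restricted to the modes in $\Lambda$. It reduces to a polynomial identity in the Fourier coordinates, and its cubic coefficient is exactly \eqref{eq triple}.

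\emph{Step 1 (infinitesimal invariance).} Fix a smooth compactly supported test function $F$ of the real coordinates $(\operatorname{Re}\Omega_j,\operatorname{Im}\Omega_j)_{j\in\Lambda_+}$. By invariance, $t\mapsto\E[F(\Phi_t(\Omega_{in}^\omega)_\Lambda)]$ is constant. By Proposition~\ref{prop wp}, each $\Omega_k$ is $C^1$ with $\frac{d}{dt}\Omega_k=-B(\Omega)_k$. Lemma~\ref{lemma B} together with conservation of $\|\Omega(t)\|_{L^2}$ gives the domination $|B(\Omega(t))_k|\le C_k\|\Omega_{in}\|_{L^2}^2\in L^1(\Xi)$. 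Differentiating under the expectation at $t=0$ therefore gives
\begin{equation*}
\E\Bigl[\sum_{k\in\Lambda}\partial_{\Omega_k}F\,B(\Omega_{in}^\omega)_k\Bigr]=0.
\end{equation*}
Since $\partial F$ is $\mathcal F_\Lambda$-measurable and bounded, Lemma~\ref{lemma B2} lets us replace $B(\Omega_{in}^\omega)_k$ by $B^\Lambda_k(\Omega_{in}^\omega)$. We obtain $\int \nabla F\cdot B^\Lambda\,\rho_\Lambda\,dx=0$ for all such $F$, where $\rho_\Lambda\propto e^{-Q_\Lambda}$ with $Q_\Lambda=\frac14\sum_{k\in\Lambda}|\Omega_k|^2/|a_k|^2$ is the nondegenerate Gaussian density on the finite-dimensional space (all $a_k\ne0$ since $\Lambda\subset\mathcal S(a)$). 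Hence $\operatorname{div}(\rho_\Lambda B^\Lambda)=0$ pointwise, because both sides are polynomial times Gaussian.

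\emph{Step 2 (divergence-free Galerkin field).} I check that $\operatorname{div}B^\Lambda=0$. The field $B^\Lambda_k$ is a sum of products $\Omega_{p'}\Omega_{q'}$ with $p'+q'=k$ and $p',q'\ne0$, so it never involves $\Omega_k$ itself. The same holds for $\overline{\Omega_k}=\Omega_{-k}$, since $p'=-k$ would force $q'=2k$, and $2k\notin\Lambda$ as shown below. The identity therefore reduces to $B^\Lambda\cdot\nabla Q_\Lambda=0$, i.e.
\begin{equation*}
\sum_{k\in\Lambda}\frac{\overline{\Omega_k}\,B^\Lambda_k(\Omega)}{|a_k|^2}\equiv0 .
\end{equation*}

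\emph{Step 3 (extracting the cubic coefficient; main care needed).} The main work is bookkeeping: I must determine exactly which pairs in $\Lambda$ sum to an element of $\Lambda$.
\begin{itemize}[leftmargin=2.8em]
\item Since $q\cdot p^\perp\neq0$, the vectors $p,q,r$ are pairwise non-collinear.
\item Then $p-q\ne\pm r$, since otherwise $p=0$ or $q=0$.
\item Also $2p\notin\{\pm q,\pm r\}$ by non-collinearity, and similarly for $2q$ and $2r$.
\end{itemize}
So the only interacting sums are $p+q=-r$ and its cyclic versions and negatives. The polynomial above is then a combination of the two monomials $\Omega_p\Omega_q\Omega_r$ and its conjugate, with coefficient proportional to
\begin{equation*}
\frac{\kappa(p,q)}{|a_r|^2}+\frac{\kappa(q,r)}{|a_p|^2}+\frac{\kappa(r,p)}{|a_q|^2}.
\end{equation*}
These monomials are linearly independent as real polynomials, so this coefficient must vanish.

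\emph{Step 4 (algebraic conclusion).} With $p+q+r=0$, the cross products coincide cyclically: $q\cdot p^\perp=r\cdot q^\perp=p\cdot r^\perp\ne0$. Dividing out this common factor in \eqref{eq kappa} gives exactly \eqref{eq triple}, up to an overall sign. The left side of \eqref{eq triple} is the determinant of the $3\times3$ matrix with rows $(1,|n|^{-2},|a_n|^{-2})$ for $n\in\{p,q,r\}$, so its vanishing is equivalent to the stated collinearity.
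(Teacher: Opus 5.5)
Your proposal is correct and follows essentially the same route as the paper. Infinitesimal invariance plus Lemma~\ref{lemma B2} gives $\operatorname{div}(\rho_\Lambda B^\Lambda)=0$; the Galerkin field is divergence-free, so $B^\Lambda\cdot\nabla Q_\Lambda\equiv0$ (this is the paper's Lemma~\ref{lemma V}); and the cubic coefficient, combined with the cyclic identity $q\cdot p^\perp=r\cdot q^\perp=p\cdot r^\perp$, yields \eqref{eq triple}. The only differences are cosmetic: the paper removes the $\Omega_{-k}$-dependence via $\kappa(-k,2k)=0$, which holds for any $\Lambda$, rather than via $2k\notin\Lambda$; and it extracts the coefficient by restricting to the real slice $y=0$ and taking $\partial^3_{\xi_p\xi_q\xi_r}$, rather than using linear independence of $\Omega_p\Omega_q\Omega_r$ and its conjugate.
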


	This result essentially relies on the Gaussian structure of $\mu_a$. Let us begin with the following lemma. Let $\Lambda=-\Lambda\subset\mathcal S(a)$ be a finite symmetric set and $\Lambda_+=\Lambda\cap\Z^2_+$. For $z=\{z_k\}_{k\in\Lambda_+}$ on $\C^{|\Lambda_+|}$, extend the coordinates by $z_{-j}=\overline{z_j}$ and define 
	\begin{equation*}
	\V^\Lambda(z)=(\V_k^\Lambda(z))_{k\in\Lambda_+},\quad \V_k^\Lambda(z)=-\sum_{\substack{p+q=k,\;p,q\in\Lambda}}\kappa(p,q)z_pz_q,\quad k\in\Lambda_+,\;z\in\C^{|\Lambda_+|}.
	\end{equation*}
	We also view $\V^\Lambda$ as a real vector field on $\R^{2|\Lambda_+|}$. The $\Lambda$-marginal of $\mu_a$ has density
	\begin{equation*}
	\rho_\Lambda(z)=\prod_{k\in\Lambda_+}\frac1{2\pi|a_k|^2}\exp\left(-\frac{|z_k|^2}{2|a_k|^2}\right)=:C_\Lambda e^{-Q_\Lambda(z)},\quad Q_\Lambda(z)=\sum_{k\in\Lambda_+}\frac{|z_k|^2}{2|a_k|^2}.
	\end{equation*}

	\begin{lemma}\label{lemma V}
	Under assumptions $\mathbf{(A_1)}$ and $\mathbf{(A_2)}$, for equation \eqref{eq euler},\eqref{eq Omega0}, let $\sigma>0$ and $a=\{a_n\}_{n\in\Z^2_*}\in h^\sigma$. Assume that $\mu_a$ is invariant. Then, for any finite symmetric set $\Lambda\subset\mathcal S(a)$, it follows that 
	\begin{equation}\label{eq V}
	\V^\Lambda(z)\cdot\nabla Q_\Lambda(z)=0\quad\forall\,z\in\C^{|\Lambda_+|}.
	\end{equation}
	\end{lemma}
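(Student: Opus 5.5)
We must show $\V^\Lambda\cdot\nabla Q_\Lambda\equiv0$ on $\C^{|\Lambda_+|}$. We derive it from invariance through the conditional drift of Lemma~\ref{lemma B2}, together with a finite-dimensional Liouville identity.

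\textbf{Step 1: a weak identity from invariance.} Fix a smooth, compactly supported test function $\varphi$ on $\R^{2|\Lambda_+|}$. Set $F(\Omega):=\varphi((\Omega_k)_{k\in\Lambda_+})$. Invariance gives $\E[F(\Phi_t(\Omega_{in}^\omega))]=\E[F(\Omega_{in}^\omega)]$ for all $t$. By Proposition~\ref{prop wp}, each $\Omega_k(t)$ is $C^1$ with $\dot\Omega_k=-B(\Omega(t))_k$. Differentiating at $t=0$ should yield
\begin{equation*}
\E\Bigl[\sum_{k\in\Lambda_+}\nabla_{z_k}\varphi\cdot\bigl(-B(\Omega_{in}^\omega)_k\bigr)\Bigr]=0 ,
\end{equation*}
where complex coordinates are read as real pairs. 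The differentiation under the expectation is justified by dominated convergence. We bound $|(\Omega_k(t)-\Omega_k(0))/t|\le\sup_{|s|\le1}|B(\Omega(s))_k|$. By Lemma~\ref{lemma B} this is at most $C|k|\sup_s\|\Omega(s)\|_{L^2}^2=C|k|\|\Omega_{in}\|_{L^2}^2$, using conservation of $L^2$ vorticity (Hölder solutions are transported by a measure-preserving flow). This bound is integrable, so the limit passes inside.

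\textbf{Step 2: conditioning.} Since $\nabla\varphi$ evaluated at the $\Lambda$-modes is $\mathcal F_\Lambda$-measurable, Lemma~\ref{lemma B2} lets us replace $B(\Omega_{in}^\omega)_k$ by $B^\Lambda_k(\Omega_{in}^\omega)=-\V_k^\Lambda(z)$, evaluated at $z_k=a_kg_k$. The identity becomes
\begin{equation*}
\int \V^\Lambda(z)\cdot\nabla\varphi(z)\,\rho_\Lambda(z)\,dz=0\qquad\forall\varphi .
\end{equation*}
Integrating by parts gives $\dvg(\rho_\Lambda\V^\Lambda)=0$ in the distributional sense, hence pointwise, since everything is polynomial times Gaussian.

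\textbf{Step 3: Liouville.} We check that $\dvg\V^\Lambda=0$ for the truncated field. The component $\V^\Lambda_k$ depends on $z_k$ only through pairs $(p,q)$ with $p=k$ or $q=k$. This would force $q=0\notin\Lambda$ or $p=0\notin\Lambda$, so these terms are absent. The conjugate variable $z_{-k}$ enters through $p=-k$, $q=2k$, but $\kappa(-k,2k)=0$ because the pair is collinear. Hence $\partial_{z_k}\V_k^\Lambda=\partial_{\bar z_k}\V_k^\Lambda=0$, and in real coordinates the divergence vanishes. Then
\begin{equation*}
0=\dvg(\rho_\Lambda\V^\Lambda)=\rho_\Lambda\bigl(\dvg\V^\Lambda-\V^\Lambda\cdot\nabla Q_\Lambda\bigr)=-\rho_\Lambda\,\V^\Lambda\cdot\nabla Q_\Lambda .
\end{equation*}
Since $\rho_\Lambda>0$, this gives \eqref{eq V}.

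\textbf{Main obstacle.} The delicate step is Step 1: justifying the time derivative of $\E F(\Phi_t)$ at $t=0$. This needs an integrable domination of the difference quotients, and hence uniform-in-time $L^2$ control along the Hölder flow. If $L^2$ conservation is not readily available, the first alternative is to use conservation of $\|\Omega\|_{L^\infty}$ together with Gaussian integrability of the $L^\infty$ norm, which gives the same domination.
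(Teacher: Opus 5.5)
Your proposal is correct and follows essentially the same route as the paper's proof. You differentiate the invariance identity at $t=0$ with the same domination $\frac{|k|}{4\pi^2}\|\Omega_{in}\|_{L^2}^2$ (from Lemma~\ref{lemma B} and enstrophy conservation), replace the drift by $\V^\Lambda$ via Lemma~\ref{lemma B2}, integrate by parts to get $\dvg(\rho_\Lambda\V^\Lambda)=0$, and conclude using $\dvg\V^\Lambda=0$ (from $\kappa(-k,2k)=0$) together with $\nabla\rho_\Lambda=-\rho_\Lambda\nabla Q_\Lambda$; the only difference is that you order the integration-by-parts and Liouville steps the other way round.
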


    \begin{remark}
    Lemma~\ref{lemma V} admits a finite-dimensional Liouville interpretation. Indeed, invariance of $\mu_a$ and conditioning on the $\Lambda$-modes imply that the marginal density $\rho_\Lambda$ is stationary under the conditional drift $\V^\Lambda$:
    \begin{equation*}
    \dvg\left(\rho_\Lambda\V^\Lambda\right)=0.
    \end{equation*}
    As $\V^\Lambda$ is divergence-free and $\rho_\Lambda=C_\Lambda e^{-Q_\Lambda}$, we therefore derive \eqref{eq V}. Thus, $\V^\Lambda$ is tangent to the level sets of $Q_\Lambda$, and $Q_\Lambda$ is a first integral of the associated finite-dimensional system $\dot z=\V^\Lambda(z)$. 
    
    This reverses the usual Gibbs-measure principle: normally, a divergence-free dynamics together with conservation of $Q_\Lambda$ implies invariance of the density $e^{-Q_\Lambda}$, whereas here invariance of the Gaussian density forces its quadratic exponent to be conserved.
    \end{remark}

	Invoking this lemma, we then establish Proposition \ref{prop triple} as follows.

	\begin{proof}[Proof of Proposition \ref{prop triple}]
	Set
	\begin{equation*}
	\Lambda=\{\pm p,\pm q,\pm r\}\subset\mathcal S(a),
	\quad
	\Lambda_+=\Lambda\cap\Z^2_+,
	\end{equation*}
	Notice that
	$q\cdot p^\perp\neq0$ and $p+q+r=0$ imply that $p,q,r$ belong
	to three distinct sign classes. We do not, however, assume that
	$p,q,r$ themselves are the chosen representatives in $\Lambda_+$.

	For every $k\in\Lambda\setminus\Lambda_+$, extend the definition of $\V_k^\Lambda$ by
	\begin{equation*}
	\V_k^\Lambda(z)
	:=-\sum_{\substack{m+n=k,\;m,n\in\Lambda}}
	\kappa(m,n)z_mz_n,
	\quad z_{-\ell}=\overline{z_\ell},\quad\ell\in\Lambda_+.
	\end{equation*}
	Then $\V_{-k}^\Lambda=\overline{\V_k^\Lambda}$ and $|a_{-k}|^{-2}=|a_k|^{-2}$.
	In what follows, we shall work entirely in real coordinates. Specifically, for each
	$k\in\Lambda_+$, write
	\begin{equation*}
	z_{\pm k}=x_k\pm iy_k,\quad\V_k^\Lambda(z)=U_k^\Lambda(x,y)+iW_k^\Lambda(x,y),\quad Q_\Lambda(z)=\frac12\sum_{k\in\Lambda_+}\frac{1}{|a_k|^2}(x_k^2+y_k^2).
	\end{equation*}	 
    
	In particular, by Lemma \ref{lemma V}, it follows that for any $x,y\in\R^{|\Lambda_+|}$,
	\begin{align}\label{eq t2}
	P(x,y):=\V^\Lambda(x,y)\cdot\nabla Q_\Lambda(x,y)=\sum_{k\in\Lambda_+}\frac{1}{|a_k|^2}\left(x_kU_k^\Lambda(x,y)+y_kW_k^\Lambda(x,y)\right)=0.	
	\end{align} 

	Let $\widehat p,\widehat q,\widehat r\in\Lambda_+$ denote the
	chosen representatives of the three sign classes
	$\{\pm p\},\{\pm q\},\{\pm r\}$, respectively. Note that they are distinct
	because $q\cdot p^\perp\neq0$. Set
	\begin{equation*}
	x_{\widehat p}=\xi_p,\quad x_{\widehat q}=\xi_q,\quad x_{\widehat r}=\xi_r.
	\end{equation*}
	In view of \eqref{eq t2}, we may restrict $P$ to the real three-dimensional coordinate slice by letting $y=0$. On this slice, one has
	\begin{equation*}
	z_p=z_{-p}=\xi_p,\quad z_q=z_{-q}=\xi_q,\quad z_r=z_{-r}=\xi_r.
	\end{equation*}
	We then denote the corresponding cubic polynomial by $\widetilde P(\xi_p,\xi_q,\xi_r)=P(x,0)$. Consequently, applying \eqref{eq t2} again, we compute that
	\begin{equation*}
	\frac{\partial^3\widetilde P}{\partial\xi_p\,\partial\xi_q\,\partial\xi_r}(0,0,0)=0,
	\end{equation*}
	which implies the
	coefficient of $\xi_p\xi_q\xi_r$ in $\widetilde P$ is $0$.

	On the other hand, let us compute this coefficient directly. Using the elementary real identity
	\begin{equation*}
	2(x_kU_k^\Lambda+y_kW_k^\Lambda)
	=z_{-k}\V_k^\Lambda+z_k\V_{-k}^\Lambda
	\end{equation*}
	and the symmetries $|a_{-k}|^{-2}=|a_k|^{-2}$, 
	$\V_{-k}^\Lambda=\overline{\V_k^\Lambda}$, we derive
	\begin{equation}\label{eq t3}
	P(x,y)=\frac12\sum_{k\in\Lambda}
	\frac{z_{-k}}{|a_k|^2}\V_k^\Lambda(z).
	\end{equation} 

    Moreover, using $p+q+r=0$ and setting $y=0$, we directly compute that
    \begin{equation*}
        \V_r^\Lambda=-2\kappa(-p,-q)z_{-p}z_{-q}=-2\kappa(p,q)z_pz_q,\quad\V_{-r}^\Lambda=-2\kappa(p,q)z_pz_q,
    \end{equation*}
    Similarly, one has
    \begin{equation*}
        \V_{\pm p}^\Lambda(z)=-2\kappa(q,r)\xi_q\xi_r,\quad \V_{\pm q}^\Lambda(z)=-2\kappa(r,p)\xi_r\xi_p.
    \end{equation*}    
    Taking $y=0$ and plugging these relations into \eqref{eq t3}, the coefficient of $\xi_p\xi_q\xi_r$ in $\widetilde P$ equals
    \begin{equation}\label{eq t4}
        -2\left(|a_r|^{-2}\kappa(p,q)+|a_p|^{-2}\kappa(q,r)+|a_q|^{-2}\kappa(r,p)\right).
    \end{equation}

    Collecting \eqref{eq kappa},\eqref{eq t3},\eqref{eq t4} with the relation $r\cdot q^\perp=p\cdot r^\perp=q\cdot p^\perp$, we thus conclude that 
	\begin{align*}
	(q\cdot p^\perp)\left(\frac{1}{|a_r|^2}\left(\frac1{|p|^2}-\frac1{|q|^2}\right)+\frac{1}{|a_p|^2}\left(\frac1{|q|^2}-\frac1{|r|^2}\right)+\frac{1}{|a_q|^2}\left(\frac1{|r|^2}-\frac1{|p|^2}\right)\right)=0.
	\end{align*}	
    As a result, by the assumption $q\cdot p^\perp\neq0$, we establish the desired relation \eqref{eq triple}. This completes the proof of Proposition \ref{prop triple}.
	\end{proof}

	\vspace{0.6em}

	We end this subsection by establishing  Lemma \ref{lemma V}.  

	\begin{proof}[Proof of Lemma \ref{lemma V}]
	Throughout the proof, $\C^{|\Lambda_+|}$ is identified with
	$\R^{2|\Lambda_+|}$. Thus, for $z_k=x_k+iy_k$ and $\V_k^\Lambda=U_k^\Lambda+iW_k^\Lambda$, the real vector field
	associated with $\V^\Lambda$ has components $(U_k^\Lambda,W_k^\Lambda)_{k\in\Lambda_+}$.

    \vspace{0.3em}

	\noindent{\it Step 1: Infinitesimal stationarity of the finite marginal.} Let us define  $X_\Lambda(\Omega):=(\Omega_k)_{k\in\Lambda_+}$ and $F(\Omega)=f(X_\Lambda(\Omega))$ for $f\in C_c^1(\C^{|\Lambda_+|};\R)$. For
	$\Omega(t)=\Phi_t(\Omega_{in})$, $\Omega_{in}\in c_0^\alpha(\T^2)$, one has
	\begin{equation*}
	\Omega_k(t)-\Omega_k(0)
	=-\int_0^t B(\Omega(s))_kds,\quad t\in\R.
	\end{equation*}
    
	Then using Lemma \ref{lemma B} and conservation of enstrophy, for any $t\neq0$ and $k\in\Lambda_+$, we have
	\begin{align*}
	\left|\frac{\Omega_k(t)-\Omega_k(0)}{t}\right|
	\leq \frac1{|t|}\int_{0\wedge t}^{0\lor t}|B(\Omega(s))_k|ds\leq\frac{|k|}{4\pi^2|t|}
	\int_{0\wedge t}^{0\lor t}\|\Omega(s)\|_{L^2}^2ds=\frac{|k|}{4\pi^2}\|\Omega_{in}\|_{L^2}^2.
	\end{align*}
	Applying the mean-value theorem in $\R^{2|\Lambda_+|}$, we thus obtain
	\begin{align}\label{eq V2}
	\left|\frac{F(\Phi_t(\Omega_{in}))-F(\Omega_{in})}{t}\right|\leq \|\nabla f\|_{L^\infty}\left(\sum_{k\in\Lambda_+}\left|\frac{\Omega_k(t)-\Omega_k(0)}{t}\right|^2\right)^{1/2}&\leq C_{f,\Lambda}\|\Omega_{in}\|_{L^2}^2.	
	\end{align}
	Meanwhile, note that this dominating function is integrable with respect to $\mu_a$:
	\begin{equation*}
	\int\|\Omega_{in}\|_{L^2}^2d\mu_a(\Omega_{in})=2(2\pi)^2\sum_{n\in\Z^2_*}|a_n|^2<\infty.
	\end{equation*}

	On the other hand, by the differentiability of every fixed Fourier mode, we compute that
	\begin{align*}
	\left.\frac{d}{dt}F(\Phi_t(\Omega_{in}))\right|_{t=0}=\sum_{k\in\Lambda_+}\left[\partial_{x_k}f(X_\Lambda(\Omega_{in}))\operatorname{Re}(-B(\Omega_{in})_k)+\partial_{y_k}f(X_\Lambda(\Omega_{in}))\operatorname{Im}(-B(\Omega_{in})_k)\right].
	\end{align*}
	Recall that $\mu_a$ is invariant, which implies 
	\begin{equation*}
	\int F(\Phi_t(\Omega_{in}))d\mu_a(\Omega_{in})=\int F(\Omega_{in})d\mu_a(\Omega_{in}).
	\end{equation*}
    
	Taking \eqref{eq V2} into account, we may therefore differentiate at $t=0$ under the integral sign. Noting that $\nabla f(X_\Lambda)$ is $\mathcal F_\Lambda$-measurable and using Lemma
	\ref{lemma B2}, it follows that
	\begin{align}
	0&=\int DF(\Omega_{in})[-B(\Omega_{in})]d\mu_a(\Omega_{in})\notag\\
    &=\E\left[
	\nabla f(X_\Lambda)\cdot\E\left[\left(-B(\Omega_{in})_k\right)_{k\in\Lambda_+}
	|\mathcal F_\Lambda
	\right]\right]\notag\\
	&=\E\left[\nabla f(X_\Lambda)\cdot\V^\Lambda(X_\Lambda)\right]\notag\\    &=\int_{\C^{|\Lambda_+|}}\V^\Lambda(z)\cdot\nabla f(z)\rho_\Lambda(z)dz,
	\label{eq V3}
	\end{align}
	where the middle two dot products are understood in the corresponding
	real coordinates. 

	\vspace{0.6em}

	\noindent{\it Step 2: Liouville property of the Galerkin vector field.}
	Fix $k\in\Lambda_+$. We first show that $U_k^\Lambda$ and $W_k^\Lambda$ are independent of $x_k,y_k$. Recall that
	\begin{equation*}
	\V_k^\Lambda(z)=-\sum_{\substack{m+n=k,\;m,n\in\Lambda}}\kappa(m,n)z_mz_n.
	\end{equation*}
	The variables $x_k,y_k$ occur only in $z_k=x_k+iy_k$ and $z_{-k}=x_k-iy_k$. If $m=k$ or $n=k$, the relation $m+n=k$ forces the other index to be $0$, which does not belong to $\Lambda$. If $m=-k$ or $n=-k$, the other index must be $2k$. However, the corresponding coefficient vanishes because $-k$ and $2k$ are collinear:
	\begin{equation*}
	\kappa(-k,2k)=\kappa(2k,-k)=0.
	\end{equation*}
	Thus, every nonzero term in $\V_k^\Lambda$ is independent of $x_k,y_k$. Consequently, $\partial_{x_k}U_k^\Lambda=\partial_{y_k}W_k^\Lambda=0$. Summing over $k\in\Lambda_+$, we then derive 
	\begin{equation}\label{eq V4}	\dvg_{\R^{2|\Lambda_+|}}\V^\Lambda=\sum_{k\in\Lambda_+}\left(\frac{\partial U_k^\Lambda}{\partial x_k}+\frac{\partial W_k^\Lambda}{\partial y_k}\right)=0.
	\end{equation}

	\vspace{0.3em}

	\noindent{\it Step 3: The Gaussian density identity.}
	We now take $f\in C_c^\infty(\C^{|\Lambda_+|};\R)$ in
	\eqref{eq V3} and apply integration by parts, which implies that
	\begin{align*}
	0=\int_{\C^{|\Lambda_+|}}\V^\Lambda\cdot\nabla f\rho_\Lambda dz=-\int_{\C^{|\Lambda_+|}}f\dvg(\rho_\Lambda\V^\Lambda)dz.
	\end{align*}
	Hence
	\begin{equation*}
	\dvg(\rho_\Lambda\V^\Lambda)=0\quad \text{in }\mathcal{D}'(\R^{2|\Lambda_+|})
	\end{equation*}
	As $\rho_\Lambda\V^\Lambda$ is smooth, this identity thus holds pointwise. Additionally, in view of the definition of Gaussian density $\rho_\Lambda$, one has $\nabla\rho_\Lambda=-\rho_\Lambda\nabla Q_\Lambda$ and $\rho_\Lambda(z)>0$ for any
	$z\in\C^{|\Lambda_+|}$.

    In particular, combining these relations with \eqref{eq V4}, we obtain that
	\begin{align*}	0=\dvg(\rho_\Lambda\V^\Lambda)=\rho_\Lambda\dvg\V^\Lambda+\nabla\rho_\Lambda\cdot\V^\Lambda=-\rho_\Lambda\nabla Q_\Lambda\cdot\V^\Lambda,
	\end{align*}
	which implies the desired relation \eqref{eq V}. The proof of Lemma \ref{lemma V} is completed.    
	\end{proof}

	\subsection{Proof of the rigidity property}\label{Sec 3.3}

    We now combine the closure property and the triple relation to prove Lemma \ref{lemma C} and Proposition \ref{prop mu}. Roughly speaking, starting from one non-degenerate pair, the former generates an infinite set of active modes, while the latter propagates a single affine law for the inverse variances throughout that set.

    \begin{proof}[Proof of Lemma \ref{lemma C}]
        As the sets $\mathcal C_j(p,q)$ are increasing, for any $m,n\in\mathcal C(p,q)$, there exists $J\in\N^+$ such that $m,n\in\mathcal C_J(p,q)$. If $(m,n)$ is non-degenerate, then
	\begin{equation*}
	\pm(m+n)\in\mathcal C_{J+1}(p,q)\subset\mathcal C(p,q).
	\end{equation*}
	Thus $\mathcal C(p,q)$ is itself closed under non-degenerate sums. 
 
	Next we show that $\mathcal C(p,q)$ is an infinite set. Suppose now that $\mathcal C(p,q)$ were finite, and set	$R=\max_{n\in\mathcal C(p,q)}|n|$. 
	Choose $n\in\mathcal C(p,q)$ with $|n|=R$. Recall that the initial pair $p,q$ is noncollinear and has unequal lengths, which ensures that the generated set is neither collinear nor contained in a single centered circle. Hence there exists $m\in\mathcal C(p,q)$ not collinear with $n$. If $|m|<R$, replace $m$ by $-m$ so that $n\cdot m\geq0$. Closure under non-degenerate sums would give $n+m\in\mathcal C(p,q)$ and
	\begin{equation*}
	|n+m|^2=R^2+|m|^2+2n\cdot m>R^2,
	\end{equation*}
	which gives a contradiction. Thus every mode not collinear with $n$ has length $R$.

	On the other hand, as the set is not contained in one centered circle, there exists $\ell\in\mathcal C(p,q)$ with $|\ell|<R$. Then the preceding argument forces $\ell$ to be collinear with $n$. Choose $m$ not collinear with $n$, so $|m|=R$, and change its sign such that $\ell\cdot m\geq0$. Then $(\ell,m)$ is non-degenerate, while closure gives $\ell+m\in\mathcal C(p,q)$ and
	\begin{equation*}
	|\ell+m|^2=|\ell|^2+R^2+2\ell\cdot m>R^2,
	\end{equation*}
	again a contradiction. Therefore, we show that $\mathcal C(p,q)$ is infinite.  
    \end{proof}

	\begin{proof}[Proof of Proposition \ref{prop mu}]  

	Since $|p|\neq|q|$, there exist constants $c_0,c_1$ satisfying
	\begin{equation*}
	|a_p|^{-2}=c_0+c_1|p|^{-2},\quad|a_q|^{-2}=c_0+c_1|q|^{-2}.
	\end{equation*}
	Indeed, $c_0,c_1$ can be directly calculated by 
    \begin{equation*}
	c_0=\frac{|p|^{-2}|a_q|^{-2}-|q|^{-2}|a_p|^{-2}}{|p|^{-2}-|q|^{-2}},\quad c_1=\frac{|a_p|^{-2}-|a_q|^{-2}}{|p|^{-2}-|q|^{-2}}.
	\end{equation*}
    Using the evenness of $a$, one sees that \eqref{eq mu} holds on $\mathcal C_0(p,q)=\{\pm p,\pm q\}$.

	We now apply an induction on $j$. Note that By Proposition~\ref{prop clo}, $\mathcal C(p,q)\subset\mathcal S(a)$. Suppose \eqref{eq mu} is satisfied on $\mathcal C_j(p,q)$, and take $k\in\mathcal C_{j+1}(p,q)\setminus\mathcal C_j(p,q)$. After changing the sign of $k$ if necessary, we may write
	$k=m+n$ with $m,n\in\mathcal C_j(p,q)$ forming a non-degenerate pair. Then we have
    \begin{equation*}
        m+n+(-k)=0,\quad n\cdot m^{\perp}\neq 0.
    \end{equation*}
    Therefore, by Proposition~\ref{prop triple}, it follows that 
    \begin{equation}\label{eq mu3}
	\frac{1}{|a_{-k}|^2}\left(\frac1{|m|^2}-\frac1{|n|^2}\right)+\frac{1}{|a_m|^2}\left(\frac1{|n|^2}-\frac1{|k|^2}\right)+\frac{1}{|a_n|^2}\left(\frac1{|k|^2}-\frac1{|m|^2}\right)=0,
	\end{equation} 
    
    Meanwhile, using the induction hypothesis, one has
	\begin{equation*}
	\frac{1}{|a_m|^2}=c_0+\frac{c_1}{|m|^2},\quad\frac{1}{|a_n|^2}=c_0+\frac{c_1}{|n|^2}.
	\end{equation*}
	Moreover, we directly compute that
	\begin{align*}
	\left(c_0+\frac{c_1}{|m|^2}\right)\left(\frac{1}{|n|^2}-\frac{1}{|k|^2}\right)+\left(c_0+\frac{c_1}{|n|^2}\right)\left(\frac{1}{|k|^2}-\frac{1}{|m|^2}\right)=\left(c_0+\frac{c_1}{|k|^2}\right)\left(\frac{1}{|n|^2}-\frac{1}{|m|^2}\right).
	\end{align*}
	Substituting these identities into \eqref{eq mu3}, we obtain
	\begin{equation*}
	\left(\frac{1}{|m|^2}-\frac{1}{|n|^2}\right)\left(\frac{1}{|a_{-k}|^2}-c_0-\frac{c_1}{|k|^2}\right)=0.
	\end{equation*}
	Since $|m|\neq|n|$, the first factor is nonzero, which thus implies that
	\begin{equation*}
	|a_{k}|^{-2}=|a_{-k}|^{-2}=c_0+\frac{c_1}{|k|^2}.
	\end{equation*}
	
    Thus \eqref{eq mu} holds on $\mathcal C_{j+1}(p,q)$. This completes the induction and the proof of Proposition \ref{prop mu}.

	\end{proof}

	\section{Classification of invariant Gaussian measures}\label{Sec 4}

    With the geometric structure of Section \ref{Sec 3} at hand, we are now ready to complete the classification. Indeed, Proposition \ref{prop mu} rules out every non-degenerate active pair for an invariant measure, leaving precisely the collinear and co-circular support configurations. For finite active support, the quasi-invariant classification follows from the same closure mechanism.

	\subsection{Proof of Theorem \ref{thm 1}}

	\begin{proof}[Proof of Theorem \ref{thm 1}] The proof is divided into four parts, establishing successively 
    \begin{equation*}
        \textup{(i)}\Rightarrow\textup{(ii)}\Rightarrow\textup{(iii)}\Rightarrow\textup{(iv)}\Rightarrow\textup{(i)}.
    \end{equation*}
     
	\noindent\textup{(i)}$\Rightarrow$\textup{(ii)}.\quad  Suppose that $\mu_a$ is invariant and that $\mathcal S(a)$ contains a non-degenerate pair $p,q$. By Lemma \ref{lemma C} and  Proposition~\ref{prop clo}, $\mathcal C(p,q)\subset\mathcal S(a)$ is infinite. Moreover, using Proposition \ref{prop mu}, there exist constants $c_0,c_1\in\R$ such that
	\begin{equation}\label{eq thm1}
	|a_n|^{-2}=c_0+\frac{c_1}{|n|^2}\quad\forall\,n\in\mathcal C(p,q).
	\end{equation}
    
	Since every bounded subset of $\Z^2_*$ is finite, we may choose distinct $\{n_j\}_{j\in\N^+}\subset\mathcal C(p,q)$ such that $|n_j|\rightarrow\infty$. On the other hand, recall that $a\in h^\sigma\subset\ell^2(\Z^2_*)$, which implies $a_{n_j}\rightarrow0$.

	We now consider all possible signs of $c_0$. If $c_0>0$, then \eqref{eq thm1} gives $|a_{n_j}|^2\rightarrow c_0^{-1}>0$, which contradicts $a_{n_j}\rightarrow0$. If $c_0=0$, the positivity of $|a_{n_j}|^{-2}$ implies $c_1>0$, and hence $|a_{n_j}|^2=|n_j|^2/c_1\rightarrow\infty$, which is again a contradiction. Finally, if $c_0<0$, then the right-hand side of \eqref{eq thm1} is negative for all sufficiently large $j$, contradicting $|a_{n_j}|^{-2}>0$. 
    
    Consequently, $\mathcal S(a)$ contains no non-degenerate pair. Hence \textup{(i)} implies \textup{(ii)}.

	\vspace{0.6em}
	\noindent\textup{(ii)}$\Rightarrow$\textup{(iii)}.\quad   If all points in $\mathcal S(a)$ are pairwise collinear, then the active Fourier support is contained in a line through the origin. Otherwise, we may choose noncollinear $p,q\in\mathcal S(a)$. Since $(p,q)$ is degenerate, one has $|p|=|q|=:R$. For any $r\in\mathcal S(a)$, if $r$ is not collinear with $p$, the degeneracy of $(r,p)$ gives $|r|=R$. If $r$ is collinear with $p$, then $r$ is not collinear with $q$, and the degeneracy of $(r,q)$ again yields $|r|=R$. Therefore, $\mathcal S(a)$ is contained in the circle centered at the origin with radius $R$, which proves \textup{(iii)}.

	\vspace{0.6em}
	\noindent\textup{(iii)}$\Rightarrow$\textup{(iv)}.\quad  Fix $\Omega_{in}\in c_0^\alpha(\T^2)$ whose Fourier coefficients vanish outside $\mathcal S(a)$. This property holds for $\mu_a$-almost every initial datum. Suppose first that $\mathcal S(a)$ is contained in a line through the origin. For $N\in\N^+$, set
	\begin{equation*}
    \Omega_{in,N}:=\sum_{\substack{n\in\Z^2_*,\;|n|\leq N}}(\Omega_{in})_ne^{in\cdot x}.
	\end{equation*}
	Every pair of active modes is collinear, and hence $B(\Omega_{in,N})=0$ for every $N\in\N^+$. Then using Lemma \ref{lemma B}  and $\Omega_{in,N}\rightarrow\Omega_{in}$ in $L^2(\T^2)$, one has
	\begin{equation*}	B(\Omega_{in})_k=0\quad\forall\,k\in\Z^2_*.
	\end{equation*}
	Moreover, $B(\Omega_{in})=\nabla\cdot(U[\Omega_{in}]\Omega_{in})$ is well-defined in $\mathcal D'(\T^2)$ because $U[\Omega_{in}],\Omega_{in}\in L^2(\T^2)$. Since all its Fourier coefficients vanish, one has $B(\Omega_{in})=0$ in $\mathcal D'(\T^2)$. Therefore, the constant trajectory $\Omega(t)=\Omega_{in}$ solves equation \eqref{eq euler}, and uniqueness in $ c_0^\alpha(\T^2)$ implies $\Phi_t(\Omega_{in})=\Omega_{in}$ for any $t\in\R$.

	Suppose now that $\mathcal S(a)$ is contained in a circle centered at the origin with radius $R$. This active Fourier support is finite, and
	\begin{equation*}
	(-\Delta)^{-1}\Omega_{in}=R^{-2}\Omega_{in}.
	\end{equation*}
	It follows that
	\begin{equation*}
	u=R^{-2}\nabla^\perp\Omega_{in},\quad u\cdot\nabla\Omega_{in}=R^{-2}\nabla^\perp\Omega_{in}\cdot\nabla\Omega_{in}=0,
	\end{equation*}
    which again implies that  $B(\Omega_{in})=0$ in $\mathcal D'(\T^2)$. Thus, by the same argument, one has $\Phi_t(\Omega_{in})=\Omega_{in}$ for any $t\in\R$.   This proves \textup{(iv)}.

    \vspace{0.6em}
	
    \noindent\textup{(iv)}$\Rightarrow$\textup{(i)}.\quad  By condition \textup{(iv)}, there exists a Borel set $E\subset c_0^\alpha(\T^2)$ with $\mu_a(E)=1$ such that
	\begin{equation*}
	\Phi_t(\Omega_{in})=\Omega_{in}\quad\forall\,\Omega_{in}\in E,\;t\in\R.
	\end{equation*}
	Hence, for every Borel set $A\subset c_0^\alpha(\T^2)$,
	\begin{equation*}
	(\Phi_t)_*\mu_a(A)=\mu_a\left(\Phi_t^{-1}(A)\right)=\mu_a(A).
	\end{equation*}
    
	This proves \textup{(i)}.  The proof of Theorem \ref{thm 1} is thus completed.
	\end{proof}

    \subsection{Proof of Theorem \ref{thm 2}}
    
    \begin{proof}[Proof of Theorem \ref{thm 2}]
    It suffices to show \textup{(i)}$\Rightarrow$\textup{(ii)}. Assume that $\mu_a$ is quasi-invariant and fix a sequence $\{t_j\}_{j\in\N^+}$ such that $t_j\rightarrow0$ and $t_j\neq0$. In view of Definition~\ref{def 1}, one has
    \begin{equation*}
(\Phi_{t_j})_*\mu_a\ll\mu_a
\quad\forall\,j\in\N^+.
\end{equation*}
    By Proposition~\ref{prop clo}, $\mathcal S(a)$ is closed under the sum and difference of every non-degenerate pair.

    Suppose for contradiction that $\mathcal S(a)$ contains a non-degenerate pair $p,q$. By Lemma~\ref{lemma C} and Proposition~\ref{prop clo},
    \begin{equation*}
    \mathcal C(p,q)\subset\mathcal S(a)\quad\text{and}\quad\mathcal C(p,q)\text{ is infinite}.
    \end{equation*}
    By our assumption, $\mathcal S(a)$ is contained in a finite union of lines through the origin. Therefore, there exists a line $\R m$ such that $\mathcal C(p,q)\cap \R m$ is infinite. We may therefore choose distinct modes $\{n_\ell\}_{\ell\in\N^+}\subset\mathcal C(p,q)\cap \R m$ such that $|n_\ell|\rightarrow\infty$.

    Since $p$ and $q$ are noncollinear, $\mathcal C(p,q)$ is not contained in $\R m$. Hence we may choose $k\in\mathcal C(p,q)\setminus \R m$. For all sufficiently large $\ell$, the pair $(n_\ell,k)$ is non-degenerate. Then by Proposition~\ref{prop clo}, one has $n_l+k\in\mathcal{S}(a)$.
    
    We now write $n_\ell=\lambda_\ell m$, where the $\lambda_\ell$ are distinct. For $\ell\neq\ell'$, we have
    \begin{equation*}
    (n_\ell+k)\cdot(n_{\ell'}+k)^\perp=(\lambda_\ell-\lambda_{\ell'})m\cdot k^\perp\neq0.
    \end{equation*}
    Thus the modes $\{n_\ell+k\}_{l\in\N^+}$ lie on infinitely many distinct lines through the origin, contradicting the assumption on $\mathcal S(a)$ that $\mathscr{D}(a)=\{\R m:m\in \mathcal{S}(a)\}$ is finite.

    Consequently, $\mathcal S(a)$ contains no non-degenerate pair. By the implication \textup{(ii)}$\Rightarrow$\textup{(i)} in Theorem~\ref{thm 1}, we thus show that $\mu_a$ is invariant. This completes the proof.
    \end{proof}
	\normalem
	\bibliographystyle{abbrv}
	\bibliography{euler_ref}

	\end{document}